\tikzset{middlearrow/.style={
        decoration={markings,
            mark= at position 0.5 with {\arrow{#1}} ,
        },
        postaction={decorate}
    }
}
\theoremstyle{plain}
\newtheorem{theorem}{Theorem}[section]
\newtheorem{lemma}[theorem]{Lemma}
\newtheorem{corollary}[theorem]{Corollary}
\theoremstyle{definition}
\newtheorem{example}[theorem]{Example}
\theoremstyle{remark}
\newtheorem{remark}[theorem]{Remark}
\makeatletter \@addtoreset{equation}{section}
\begin{document}

\begin{center}
{\Large {\bf Schmidt Type Partitions}}

\vskip 6mm

{\small Runqiao Li$^{1}$ and Ae Ja Yee$^{2}$\footnote[2]{Partially supported by a grant ($\#633963$) from the Simons Foundation.} \\[2mm]

Department of Mathematics\\
The Pennsylvania State University\\
University Park, PA 16802, USA\\[3mm]

$^{1}$runqiaoli@outlook.com and $^{2}$yee@psu.edu\\[2mm]}
\end{center}

\noindent {\bf Abstract.}
Recently, Andrews and Paule studied Schmidt type partitions using MacMahon's Partition Analysis and obtained various interesting results. In this paper, we focus on the combinatorics of Schmidt type partition theorems and characterize them in a general and refined form. In addition, we also present some overpartition analogues of Schmidt type partition theorems. 

\noindent \textbf{Keywords:} Schmidt type partitions, $k$-tuple partitions, $k$-elongated partition diamonds, overpartitions

\noindent \textbf{AMS Classification:} 05A17; 11P84

\section{Introduction}

A \emph{partition} is a finite weakly decreasing sequence of positive integers  $\lambda=(\lambda_1,\lambda_2,\ldots,\lambda_l)$. Each summand $\lambda_i$ is called a \emph{part} of the partition $\lambda$ and the number of parts is called the \emph{length} of $\lambda$, denoted by $\ell(\lambda)$. The \emph{weight} of $\lambda$ is the sum of its parts, denoted by $|\lambda|$. If $|\lambda|=n$, $\lambda$ is called a partition of $n$. To distinguish from other types of partitions, partitions with no restrictions on parts are referred to ordinary partitions if needed.

In 1999, Schmidt \cite{Sch99} proposed the following interesting problem. 
\begin{quote}
ÄúLet $s(n)$ denote the number of partitions $(\lambda_1,\lambda_2,\lambda_3\ldots)$ satisfying $\lambda_1>\lambda_2>\lambda_3>\cdots$ and $\lambda_1+\lambda_3+\lambda_5+\cdots=n$. Then $s(n)$ equals the number of all partitions of $n$ for $n\geq1$.
\end{quote}

This problem was immediately confirmed by Mork \cite{M20} with eight other independent solvers. However, the problem has not received much attention until a recent work of Andrews and Paule. In 2022, Andrews and Paule \cite{AP22} explored Schmidt's problem using MacMahon's Partition Analysis, which leads to numerous interesting results. 

A \emph{Schmidt $k$-partition} of $n$ is defined to be a partition $(\lambda_1,\lambda_2,\lambda_3,\ldots)$ with $\lambda_1>\lambda_2>\lambda_3>\cdots$ and $\lambda_1+\lambda_{k+1}+\lambda_{2k+1}+\cdots=n$; an \emph{unrestricted Schmidt $k$-partition} of $n$ is defined to be a partition $(\lambda_1,\lambda_2,\lambda_3,\ldots)$ with $\lambda_1\geq\lambda_2\geq\lambda_3\geq\cdots$ and $\lambda_1+\lambda_{k+1}+\lambda_{2k+1}+\cdots=n$.

Schmidt's result can be restated as follows.
\begin{theorem}\label{gp1}
The number of Schmidt $2$-partitions of $n$ equals the number of all partitions of $n$.
\end{theorem}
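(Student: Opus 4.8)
The plan is to compute the generating function $F(q)=\sum_{n\ge0}s(n)q^n$, where $s(n)$ denotes the number of Schmidt $2$-partitions of $n$, and to match it against $\prod_{k\ge1}(1-q^k)^{-1}=\sum_{n\ge0}p(n)q^n$. The device that makes the odd-indexed sum tractable is the \emph{first-difference} (gap) encoding of a strict partition. Given $\lambda_1>\lambda_2>\cdots>\lambda_l\ge1$, set $g_j=\lambda_j-\lambda_{j+1}$ for $1\le j<l$ and $g_l=\lambda_l$, with the convention $\lambda_{l+1}=0$; then each $g_j\ge1$, the map $\lambda\mapsto(g_1,\dots,g_l)$ is a bijection onto $\mathbb{Z}_{\ge1}^{\,l}$, and $\lambda_i=\sum_{j\ge i}g_j$. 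The point is that the gap $g_j$ is counted in $\lambda_i$ for every $i\le j$, so it enters the odd-indexed sum exactly once for each odd $i\le j$, i.e. $\lceil j/2\rceil$ times. Hence
\begin{equation*}
\lambda_1+\lambda_3+\lambda_5+\cdots=\sum_{j=1}^{l}\Big\lceil \tfrac{j}{2}\Big\rceil\,g_j .
\end{equation*}

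Summing over all lengths $l\ge0$ and all admissible gap vectors, and using $\sum_{g\ge1}q^{cg}=q^{c}/(1-q^{c})$, I would obtain
\begin{equation*}
F(q)=\sum_{l\ge0}\ \prod_{j=1}^{l}\frac{q^{\lceil j/2\rceil}}{1-q^{\lceil j/2\rceil}} .
\end{equation*}
The exponents $\lceil j/2\rceil$ run through $1,1,2,2,3,3,\dots$, so it is natural to collect terms according to the parity of $l$. Writing $(q;q)_m=\prod_{k=1}^{m}(1-q^k)$, the even length $l=2m$ contributes $q^{m(m+1)}/(q;q)_m^{2}$, while the odd length $l=2m+1$ contributes this same factor times $q^{m+1}/(1-q^{m+1})$; adding the two and using $(q;q)_m(1-q^{m+1})=(q;q)_{m+1}$ collapses each pair into a single term, giving
\begin{equation*}
F(q)=\sum_{m\ge0}\frac{q^{m(m+1)}}{(q;q)_m\,(q;q)_{m+1}} .
\end{equation*}

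Finally I would recognize the right-hand side as the \emph{Durfee rectangle} dissection of an arbitrary partition: every partition contains a unique maximal $m\times(m+1)$ rectangle in the upper-left of its Young diagram, to the right of which sits a partition with at most $m$ parts (generating function $1/(q;q)_m$) and below which sits a partition into parts $\le m+1$ (generating function $1/(q;q)_{m+1}$). This is the classical identity $\sum_{m\ge0}q^{m(m+1)}/\big((q;q)_m(q;q)_{m+1}\big)=1/(q;q)_\infty$, whence $F(q)=\prod_{k\ge1}(1-q^k)^{-1}$ and the theorem follows. I expect the main obstacle to be purely bookkeeping: tracking the multiplicity $\lceil j/2\rceil$ of each gap and, in particular, handling the two parities of $l$ so that the series telescopes cleanly into the Durfee-rectangle form; once it is in that form the classical identity finishes the argument. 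I would also note that the derivation is secretly bijective — reading the Durfee-rectangle decomposition back through the gap encoding converts it into an explicit bijection between ordinary partitions of $n$ and Schmidt $2$-partitions of $n$, which is likely the most natural refined statement to record.
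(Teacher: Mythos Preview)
Your argument is correct: the gap encoding gives $\lambda_1+\lambda_3+\cdots=\sum_j\lceil j/2\rceil g_j$, the parity split collapses to $\sum_{m\ge0}q^{m(m+1)}/\big((q;q)_m(q;q)_{m+1}\big)$, and that sum equals $1/(q;q)_\infty$ by the $m\times(m+1)$ Durfee-rectangle dissection. Each step checks out.

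The paper, however, does not argue this way. It proves the theorem as the $k=2$ case of a general bijection $\Phi$ (Section~2) between $k\times t$ nonnegative integer matrices and integer sequences of length $kt$, under which row-strictness corresponds to strict decrease of the sequence. For Theorem~\ref{gp1} specifically (see Corollary~\ref{c1main1}), an ordinary partition is split along its \emph{Durfee square} into two strict partitions $(\alpha,\beta)$, these are placed as the rows of a $2\times t$ matrix, and $\Phi$ sends that matrix to a Schmidt $2$-partition of length $2t$ or $2t-1$. So the paper's proof is fully bijective from the outset and yields the refinement ``Durfee square size $t$ $\leftrightarrow$ length $2t$ or $2t-1$''; your implicit refinement is instead ``Durfee rectangle parameter $m$ $\leftrightarrow$ length $2m$ or $2m+1$'', which is a genuinely different slicing of the same identity (e.g.\ $(2,2)$ has Durfee square size $2$ but Durfee rectangle parameter $1$). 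What your approach buys is self-containment and economy: it needs only the gap substitution and one classical $q$-identity. What the paper's approach buys is uniformity: the same map $\Phi$ handles Schmidt $k$-partitions, unrestricted Schmidt $k$-partitions, $k$-elongated diamonds, and the overpartition analogues simultaneously, which a gap-variable computation would have to redo case by case.
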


\begin{remark}
Wang and Xu \cite{WX22} found a strong refinement of Theorem \ref{gp1} according to a partition statistic, namely the minimal excludant.
\end{remark}

A $k$-tuple $(\alpha^1,\alpha^2,\ldots,\alpha^k)$ is called a \emph{$k$-tuple partition} of $n$ if each $\alpha^i$ is a partition and $|\alpha^1|+|\alpha^2|+\cdots+|\alpha^k|=n$. We adopt the terminologies bipartitions and triple partitions for $2$-tuple partitions and $3$-tuple partitions, respectively. A partition $\lambda$ is called \emph{$k$-distinct} if two adjacent parts differ by at least $k$. For the special case when $k=1$, we call strict partitions for short. 

In \cite{AP22}, generalizing the result of Schmidt, Andrews and Paule established some connections between Schmidt $k$-partitions and bipartitions or tripartitions of partitions for $k=2,3$. 

\begin{theorem}[Andrews--Paule \cite{AP22}] \label{gp2}
The number of unrestricted Schmidt $2$-partitions of $n$ equals the number of bipartitions of $n$.
\end{theorem}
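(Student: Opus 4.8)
The plan is to exploit the first-difference encoding of an ordinary partition and to translate the tracked statistic $\lambda_1+\lambda_3+\lambda_5+\cdots$ into these new coordinates, where it will factor in a way that manifestly produces bipartitions. An unrestricted Schmidt $2$-partition is simply an ordinary partition weighted by its odd-indexed part sum, so any faithful recoordinatization of ordinary partitions is available.

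First I would encode a weakly decreasing sequence $\lambda_1\geq\lambda_2\geq\cdots\geq\lambda_\ell>0$ by its consecutive differences $d_i=\lambda_i-\lambda_{i+1}\geq 0$ (with the convention $\lambda_{\ell+1}=0$), so that $\lambda_i=\sum_{j\geq i}d_j$. This is the classical bijection between partitions and finitely supported sequences $(d_1,d_2,\ldots)$ of nonnegative integers. Substituting $\lambda_i=\sum_{j\geq i}d_j$ into the tracked statistic gives
\[
\lambda_1+\lambda_3+\lambda_5+\cdots=\sum_{j\geq 1}c_j\,d_j,
\]
where $c_j$ counts the odd indices $i\leq j$, i.e. $c_j=\lceil j/2\rceil$.

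The conceptual heart of the argument is the observation that each positive integer $m$ occurs as a value $c_j$ exactly twice, namely for $j=2m-1$ and $j=2m$, so that
\[
\lambda_1+\lambda_3+\lambda_5+\cdots=\sum_{m\geq 1}m\,(d_{2m-1}+d_{2m}).
\]
From here the conclusion follows in two equivalent ways. As a generating function, summing $q^{\,\lambda_1+\lambda_3+\cdots}$ over all choices of the independent nonnegative integers $d_j$ yields $\prod_{j\geq1}(1-q^{\lceil j/2\rceil})^{-1}=\prod_{m\geq1}(1-q^m)^{-2}$, which is exactly the generating function for bipartitions. For a bijective version, I would send $(d_j)$ to the pair $(\alpha^1,\alpha^2)$ in which $\alpha^1$ has $d_{2m-1}$ parts equal to $m$ and $\alpha^2$ has $d_{2m}$ parts equal to $m$; then $|\alpha^1|+|\alpha^2|$ equals the displayed statistic, and the composite map $\lambda\mapsto(d_j)\mapsto(\alpha^1,\alpha^2)$ is a bijection between unrestricted Schmidt $2$-partitions and bipartitions.

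Since the underlying bijections are standard, I do not expect a genuine obstacle; the only real care is bookkeeping. Specifically, I would want to verify the coefficient $c_j=\lceil j/2\rceil$ and the doubling pattern precisely, and check the edge conventions (the treatment of the last part and of the empty partition for $n=0$) so that the difference map is a clean bijection. Everything else is routine.
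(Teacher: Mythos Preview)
Your argument is correct. The difference encoding $d_i=\lambda_i-\lambda_{i+1}$ is a bijection between ordinary partitions and finitely supported nonnegative sequences, the coefficient computation $c_j=\lceil j/2\rceil$ is right, and the resulting map $(d_j)\mapsto(\alpha^1,\alpha^2)$ via part multiplicities is a clean weight-preserving bijection.

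The paper takes a different but closely related route. It builds a single bijection $\Phi$ from $k\times t$ matrices to length-$kt$ sequences (Section~2), proves a general refined statement for all $k$ (Theorem~\ref{main2}), and then recovers Theorem~\ref{gp2} as the special case $k=2$ of Corollary~\ref{c1main2}. For $k=2$ the paper's $\Phi$ sends a bipartition $(\alpha^1,\alpha^2)$ (rows of the matrix) to the sequence with $\lambda_{2j-1}=\alpha^1_j+\alpha^2_j$ and $\lambda_{2j}=\alpha^2_j+\alpha^1_{j+1}$; computing first differences shows $d_{2j-1}=\alpha^1_j-\alpha^1_{j+1}$ and $d_{2j}=\alpha^2_j-\alpha^2_{j+1}$. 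Thus the paper's bipartition and yours are related simply by conjugating each component. What the paper's framework buys is uniformity in $k$ and the finer refinement by $t$ and $r$; what your argument buys is a short, self-contained proof for $k=2$ that also makes the generating-function identity $\prod_{m\ge 1}(1-q^m)^{-2}$ immediate without any extra machinery.
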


\begin{theorem}[Andrews--Paule \cite{AP22}] \label{gp3}
Schmidt $3$-partitions of $n$ with $3t$ or $3t-1$ parts are equinumerous with triple partitions $(\alpha,\beta,\gamma)$ of $n$ where $\alpha$ is a $2$-distinct partition of length $t$, $\beta$ is a strict partition of length $t$ and $\gamma$ is an ordinary partition of length at most $t$.
\end{theorem}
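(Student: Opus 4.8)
My plan is to prove the theorem by computing and matching generating functions, tracking $n$ by $q$ with the length parameter $t$ held fixed. First, for the right-hand side I would record the three factors directly. A $2$-distinct partition of length exactly $t$ is obtained from an arbitrary partition into at most $t$ parts by adding the staircase $(2t-1,2t-3,\dots,3,1)$, so its generating function is $q^{t^2}/(q;q)_t$, where $(q;q)_t=\prod_{i=1}^t(1-q^i)$; likewise a strict partition of length exactly $t$ contributes $q^{t(t+1)/2}/(q;q)_t$, and an ordinary partition of length at most $t$ contributes $1/(q;q)_t$. Multiplying, the generating function for the triples $(\alpha,\beta,\gamma)$ is
\[
\frac{q^{t^2}}{(q;q)_t}\cdot\frac{q^{t(t+1)/2}}{(q;q)_t}\cdot\frac{1}{(q;q)_t}=\frac{q^{t(3t+1)/2}}{(q;q)_t^{3}}.
\]

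For the left-hand side, I would group the parts of a Schmidt $3$-partition into consecutive triples $(a_j,b_j,c_j)=(\lambda_{3j-2},\lambda_{3j-1},\lambda_{3j})$, so that the marked parts are exactly $a_1,\dots,a_t$ and the weight is $n=a_1+\cdots+a_t$. I would then pass to the gap coordinates $d_k=\lambda_k-\lambda_{k+1}\ge 1$ (the last gap measuring the smallest part above $0$). A short computation shows that each gap $d_k$ enters the weight $\sum_j a_j$ with coefficient $\lfloor (k+2)/3\rfloor$; in particular the three gaps inside the $j$-th triple all carry weight $j$. Writing $d_k=1+e_k$ with $e_k\ge 0$ and summing the resulting geometric series, the case of exactly $3t$ parts yields $q^{3t(t+1)/2}/(q;q)_t^{3}$, while the case of exactly $3t-1$ parts (where the final triple has only two gaps, each of weight $t$) yields $q^{t(3t+1)/2}(1-q^t)/(q;q)_t^{3}$.

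The decisive step is to add the two cases. Since $3t(t+1)/2=t(3t+1)/2+t$, the two contributions combine as
\[
\frac{q^{3t(t+1)/2}+q^{t(3t+1)/2}(1-q^t)}{(q;q)_t^{3}}=\frac{q^{t(3t+1)/2}\bigl(q^t+1-q^t\bigr)}{(q;q)_t^{3}}=\frac{q^{t(3t+1)/2}}{(q;q)_t^{3}},
\]
which matches the right-hand side and establishes the equinumerosity.

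I expect the main obstacle to be precisely this interplay between the two length classes: neither the $3t$-part family nor the $3t-1$-part family alone matches the triple-partition count, and the matching hinges on the clean cancellation of the $q^t$ terms above. A secondary subtlety is bookkeeping the boundary gap in the $3t-1$ case correctly, since the missing part $c_t$ is exactly what removes one factor of $1-q^t$; one should also check that the marked weight is invariant under the freedom in $b_j,c_j$, which is what legitimizes the gap computation. If a bijective proof is preferred, the product form suggests reading the three columns of shifted gaps $e_{3j-2},e_{3j-1},e_{3j}$ as the three partitions $\alpha,\beta,\gamma$, and making this compatible across the two length classes is where the real work would lie.
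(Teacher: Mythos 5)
Your proof is correct, but it proceeds by a genuinely different route from the paper's. The paper argues bijectively: Lemma \ref{l1gp3} transfers the staircase $(t-1,t-2,\ldots,1,0)$ from $\alpha$ to $\gamma$, turning the triple $(\alpha,\beta,\gamma)$ into three strict partitions of lengths $(t,t,t)$ or $(t,t,t-1)$ according as $\ell(\gamma)=t$ or $\ell(\gamma)<t$, and Theorem \ref{main1} (via the matrix bijection $\Phi$ of Section 2) then identifies these with Schmidt $3$-partitions having $3t$, respectively $3t-1$, parts. Your generating-function argument is sound throughout: the gap coordinates $d_k=1+e_k$ give a bijection onto nonnegative integer vectors, the marked weight is indeed linear in the gaps with coefficient $\lfloor(k+2)/3\rfloor$, and the series $q^{3t(t+1)/2}/(q;q)_t^3$ and $q^{t(3t+1)/2}(1-q^t)/(q;q)_t^3$ together with the final cancellation all check out. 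What the paper's method buys is an explicit bijection and the refinement recorded in the corollary following Lemma \ref{l1gp3}; what yours buys is brevity and independence from the machinery of Section 2. One remark: your computation secretly contains the same refinement, since stratifying $\gamma$ by $\ell(\gamma)=t$ versus $\ell(\gamma)<t$ splits the right-hand side as $q^{t(3t+1)/2}\bigl(q^t+(1-q^t)\bigr)/(q;q)_t^3$, matching your two length classes term by term; so the ``decisive'' cancellation is really the shadow of a class-by-class correspondence, and the bijection you sketch at the end (reading $e_{3j-2},e_{3j-1},e_{3j}$ as the columns of $\alpha,\beta,\gamma$) is essentially the inverse of the paper's map $\Phi$ composed with the staircase shift.
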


\begin{remark}
Recently, Ji \cite{Ji21} gave a combinatorial proof of Theorem \ref{gp2}, from which a four-variable refinement follows. 
\end{remark}

In \cite{AP22}, Andrews and Paule viewed partition diamonds as Schmidt type partitions and investigated the arithmetic properties of $k$-elongated partition diamonds. To give the precise definition of $k$-elongated partition diamonds, we first introduce an order $\succeq$ on integer sequences. Given two sequences $(a_1,a_2,\ldots,a_m)$ and $(b_1,b_2,\ldots,b_n)$, we say
\[
(a_1,a_2,\ldots,a_m)\succeq(b_1,b_2,\ldots,b_n) \text{ if and only if }  \min\{a_1, \ldots,a_m\}\geq \max\{b_1,\ldots,b_n\}.
\]
Also, for a  sequence $(a)$ of length $1$, we will write it as $a$. 

A \emph{$k$-elongated partition diamond} of $n$ is a nonnegative integer sequence $\pi=(\pi_1, \ldots,\pi_{t(2k+1)})$  for some $t\ge 1$ such that 
\begin{enumerate}
\item[(i)]
$\pi_{1}+\pi_{2k+2}+\pi_{4k+3}+\cdots\pi_{(t-1)(2k+1)+1}=n;$
\item[(ii)] for $i=0,\ldots, t-1$,
\end{enumerate}
\begin{equation}
\pi_{i(2k+1)+1} \succeq(\pi_{i(2k+1)+2},\pi_{i(2k+1)+3} )\succeq\cdots\succeq(\pi_{(i+1)(2k+1)-1},\pi_{(i+1)(2k+1)}) \succeq \pi_{(i+1)(2k+1)+1},\label{buildingblock}
\end{equation}
where $\pi_{t(2k+1)+1}=0$. 
Each subsequence in \eqref{buildingblock}
is a building block of the $k$-elongated partition diamond $\pi$. 
The length of $\pi$ is defined to be the number of its building blocks, i.e., $t$.

A $k$-elongated partition diamond $\pi$ can be regarded as a geometric configuration. Each element $\pi_i$ is represented as a node, and an arrow pointing from $\pi_i$ to $\pi_j$ is interpreted as $\pi_i\geq\pi_j$. Then $\pi$ is a chain obtained by gluing its building blocks together. Figure \ref{diamond1} gives an example of the first building block, and Figure \ref{diamond2} illustrates a $k$-elongated partition diamond of length $t$.
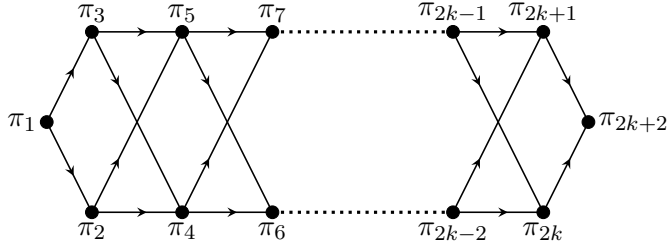
\begin{figure}[ht]
\begin{center}
\begin{tikzpicture}[scale=1.2]
\path (0,1) coordinate (1); \path (0.5,0) coordinate (2); \path (0.5,2) coordinate (3);

\path (1.5,0) coordinate (4); \path (1.5,2) coordinate (5); \path (2.5,0) coordinate (6);

\path (2.5,2) coordinate (7); \path (4.5,0) coordinate (8); \path (4.5,2) coordinate (9);

\path (5.5,0) coordinate (10);  \path (5.5,2) coordinate (11); \path (6,1) coordinate (12);

\path (1,1) coordinate (m1);  \path (2,1) coordinate (m2); \path (5,1) coordinate (m3);

\draw[line width=0.6pt, middlearrow={stealth reversed}]  (2)--(1);

\draw[line width=0.6pt, middlearrow={stealth reversed}]  (3)--(1);

\draw[line width=0.6pt, middlearrow={stealth reversed}]  (m1)--(3);

\draw[line width=0.6pt, middlearrow={stealth reversed}]  (m1)--(2);

\draw[line width=0.6pt, middlearrow={stealth reversed}]  (5)--(3);

\draw[line width=0.6pt, middlearrow={stealth reversed}]  (4)--(2);

\draw[line width=0.6pt, middlearrow={stealth reversed}]  (m2)--(5);

\draw[line width=0.6pt, middlearrow={stealth reversed}]  (m2)--(4);

\draw[line width=0.6pt, middlearrow={stealth reversed}]  (7)--(5);

\draw[line width=0.6pt, middlearrow={stealth reversed}]  (6)--(4);

\draw[line width=0.6pt, middlearrow={stealth reversed}]  (11)--(9);

\draw[line width=0.6pt, middlearrow={stealth reversed}]  (10)--(8);

\draw[line width=0.6pt, middlearrow={stealth reversed}]  (m3)--(9);

\draw[line width=0.6pt, middlearrow={stealth reversed}]  (m3)--(8);

\draw[line width=0.6pt, middlearrow={stealth reversed}]  (12)--(11);

\draw[line width=0.6pt, middlearrow={stealth reversed}]  (12)--(10);

\draw[line width=0.6pt]  (m1)--(5); \draw[line width=0.6pt]  (m1)--(4);

\draw[line width=0.6pt]  (m2)--(7); \draw[line width=0.6pt]  (m2)--(6);

\draw[line width=0.6pt]  (m3)--(11); \draw[line width=0.6pt]  (m3)--(10);

\draw[line width=1.2pt, dotted]  (7)--(9); \draw[line width=1.2pt, dotted]  (6)--(8);

\draw[fill=black] (1) circle (2pt); \draw[fill=black] (2) circle (2pt); \draw[fill=black] (3) circle (2pt);

\draw[fill=black] (4) circle (2pt); \draw[fill=black] (5) circle (2pt); \draw[fill=black] (6) circle (2pt);

\draw[fill=black] (7) circle (2pt); \draw[fill=black] (8) circle (2pt); \draw[fill=black] (9) circle (2pt);

\draw[fill=black] (10) circle (2pt); \draw[fill=black] (11) circle (2pt); \draw[fill=black] (12) circle (2pt);

\node[left] at (1) {$\pi_{1}$}; \node[below] at (2) {$\pi_2$}; \node[above] at (3) {$\pi_3$};

\node[below] at (4) {$\pi_4$}; \node[above] at (5) {$\pi_5$}; \node[below] at (6) {$\pi_6$};

\node[above] at (7) {$\pi_7$}; \node[below] at (8) {$\pi_{2k-2}$}; \node[above] at (9) {$\pi_{2k-1}$};

\node[below] at (10) {$\pi_{2k}$}; \node[above] at (11) {$\pi_{2k+1}$}; \node[right] at (12) {$\pi_{2k+2}$};
\end{tikzpicture}
\end{center}\caption{The first building block.}\label{diamond1}
\end{figure}

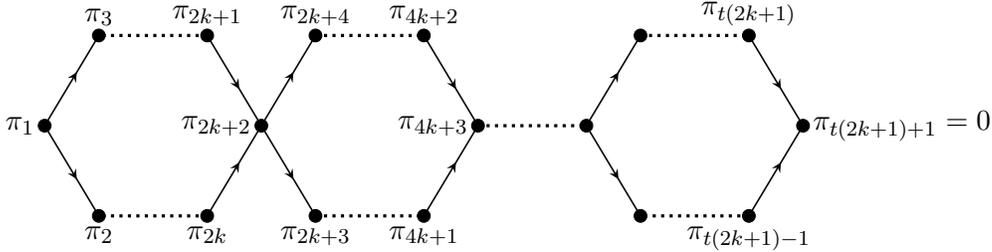
\begin{figure}[ht]
\begin{center}
\begin{tikzpicture}[scale=1.2]
\path (0,1) coordinate (1); \path (0.6,0) coordinate (2); \path (0.6,2) coordinate (3);

\path (1.8,0) coordinate (4); \path (1.8,2) coordinate (5); \path (2.4,1) coordinate (6);

\path (3,0) coordinate (7); \path (3,2) coordinate (8); \path (4.2,0) coordinate (9);

\path (4.2,2) coordinate (10); \path (4.8,1) coordinate (11);

\path (6,1) coordinate (12); \path (6.6,0) coordinate (13); \path (6.6,2) coordinate (14);

\path (7.8,0) coordinate (15); \path (7.8,2) coordinate (16); \path (8.4,1) coordinate (17);

\draw[line width=0.6pt, middlearrow={stealth reversed}]  (2)--(1);

\draw[line width=0.6pt, middlearrow={stealth reversed}]  (3)--(1);

\draw[line width=1.2pt, dotted]  (5)--(3);

\draw[line width=1.2pt, dotted]  (4)--(2);

\draw[line width=0.6pt, middlearrow={stealth reversed}]  (6)--(5);

\draw[line width=0.6pt, middlearrow={stealth reversed}]  (6)--(4);

\draw[line width=0.6pt, middlearrow={stealth reversed}]  (7)--(6);

\draw[line width=0.6pt, middlearrow={stealth reversed}]  (8)--(6);

\draw[line width=0.6pt, middlearrow={stealth reversed}]  (11)--(9);

\draw[line width=0.6pt, middlearrow={stealth reversed}]  (11)--(10);

\draw[line width=1.2pt, dotted]  (7)--(9);

\draw[line width=1.2pt, dotted]  (8)--(10);

\draw[line width=0.6pt, middlearrow={stealth reversed}]  (13)--(12);

\draw[line width=0.6pt, middlearrow={stealth reversed}]  (14)--(12);

\draw[line width=1.2pt, dotted]  (15)--(13);

\draw[line width=1.2pt, dotted]  (14)--(16);

\draw[line width=0.6pt, middlearrow={stealth reversed}]  (17)--(15);

\draw[line width=0.6pt, middlearrow={stealth reversed}]  (17)--(16);

\draw[line width=1.2pt, dotted]  (11)--(12);

\draw[fill=black] (1) circle (2pt); \draw[fill=black] (2) circle (2pt); \draw[fill=black] (3) circle (2pt);

\draw[fill=black] (4) circle (2pt); \draw[fill=black] (5) circle (2pt); \draw[fill=black] (6) circle (2pt);

\draw[fill=black] (7) circle (2pt); \draw[fill=black] (8) circle (2pt); \draw[fill=black] (9) circle (2pt);

\draw[fill=black] (10) circle (2pt); \draw[fill=black] (11) circle (2pt);

\draw[fill=black] (12) circle (2pt); \draw[fill=black] (13) circle (2pt); \draw[fill=black] (14) circle (2pt);

\draw[fill=black] (15) circle (2pt); \draw[fill=black] (16) circle (2pt); \draw[fill=black] (17) circle (2pt);

\node[left] at (1) {$\pi_1$}; \node[below] at (2) {$\pi_2$}; \node[above] at (3) {$\pi_3$};

\node[below] at (4) {$\pi_{2k}$}; \node[above] at (5) {$\pi_{2k+1}$}; \node[left] at (6) {$\pi_{2k+2}$};

\node[below] at (7) {$\pi_{2k+3}$}; \node[above] at (8) {$\pi_{2k+4}$}; \node[below] at (9) {$\pi_{4k+1}$};

\node[above] at (10) {$\pi_{4k+2}$}; \node[left] at (11) {$\pi_{4k+3}$};

\node[below] at (15) {$\pi_{t(2k+1)-1}$}; \node[above] at (16) {$\pi_{t(2k+1)}$};

\node[right] at (17) {$\pi_{t(2k+1)+1}=0$};
\end{tikzpicture}
\end{center}\caption{A $k$-elongated partition diamond of length $t$.}\label{diamond2}
\end{figure}

Let $d_k(n)$ denote the number of $k$-elongated partition diamonds of $n$. Andrews and Paule \cite{AP22} then obtained the following generating function formula for $d_k(n)$: 
\begin{align}
\sum\limits_{n=0}^\infty d_k(n)q^n=\frac{(q^2;q^2)_\infty^k}{(q;q)_\infty^{3k+1}}, \label{andrewspaule}
\end{align}
where $(a;q)_\infty=\prod\limits_{i=0}^\infty(1-aq^i)$. Since
\begin{align*}
\frac{(q^2;q^2)_\infty^k}{(q;q)_\infty^{3k+1}}=\frac{(-q;q)_\infty^k}{(q;q)_\infty^k}\frac{1}{(q;q)_\infty^{k+1}},
\end{align*}
we can interpret the above generating function using overpartitions. Recall that an \emph{overpartition} \cite{sj1} is a partition in which the final occurrence of a part may be overlined. The identity in \eqref{andrewspaule} can be restated as follows.
\begin{theorem}[Andrews--Paule \cite{AP22}]\label{gp4}
The number of $k$-elongated partition diamonds of $n$ equals the number of $(2k+1)$-tuple partitions $(\alpha^{1},\alpha^2,\ldots,\alpha^{2k+1})$ of $n$ such that $\alpha^{2},\alpha^{4},\ldots,\alpha^{2k}$ are overpartitions while the others are ordinary partitions. 
\end{theorem}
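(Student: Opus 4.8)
The plan is to construct an explicit weight-preserving bijection between $k$-elongated partition diamonds of $n$ and the $(2k+1)$-tuples described, organized around the consecutive differences (gaps) inside each building block. Write $a_i=\pi_{i(2k+1)+1}$ for the apexes, so that condition (i) reads $n=a_0+a_1+\cdots+a_{t-1}$ and the chain \eqref{buildingblock} forces $a_0\ge a_1\ge\cdots\ge a_{t-1}\ge a_t=0$. Inside building block $i$, denote by $M_j^{(i)}\ge m_j^{(i)}$ the larger and smaller entry of its $j$-th pair, and split the block into its $2k+1$ gaps
\[
g_0^{(i)}=a_i-M_1^{(i)},\quad g_{2j-1}^{(i)}=M_j^{(i)}-m_j^{(i)},\quad g_{2j}^{(i)}=m_j^{(i)}-M_{j+1}^{(i)},\quad g_{2k}^{(i)}=m_k^{(i)}-a_{i+1},
\]
all nonnegative and otherwise free. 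The conceptual crux is that although the pair entries do not count toward $n$, each apex is the sum of all gaps lying in the blocks below it; hence $n=\sum_i a_i=\sum_{i}\sum_{p=0}^{2k}(i+1)\,g_p^{(i)}$, so a gap at position $p$ in block $i$ carries weight $(i+1)g_p^{(i)}$.

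First I would read a partition $\alpha^{p+1}$ off each gap position $p\in\{0,1,\dots,2k\}$ by declaring the multiplicity of the part $m$ in $\alpha^{p+1}$ to be $g_p^{(m-1)}$; the weight identity above makes this map contribute exactly $|\alpha^{p+1}|$ from position $p$, so the total is preserved. For an even position $p$ the gap is an unconstrained nonnegative integer, giving an ordinary partition; there are $k+1$ such positions, namely $p=0,2,\dots,2k$, and these produce $\alpha^{1},\alpha^{3},\dots,\alpha^{2k+1}$. For an odd position $p=2j-1$ the gap $g_{2j-1}^{(i)}$ is the spread of a single pair, and when it is positive the pair admits two orientations (which of $b,c$ is larger); I would record this binary choice as an overlining of the part $m=i+1$ in $\alpha^{2j}$ -- exactly the freedom an overpartition allows for the last occurrence of a part. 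This yields overpartitions in the $k$ positions $p=1,3,\dots,2k-1$, i.e. $\alpha^{2},\alpha^{4},\dots,\alpha^{2k}$, matching the statement precisely.

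To finish I would check invertibility: from the $2k+1$ partitions the gaps $g_p^{(i)}$ are recovered as multiplicities, the apexes as $a_i=\sum_{i'\ge i}\sum_p g_p^{(i')}$, the whole chain by successive subtraction, and each pair's orientation from the overline on the corresponding part; taking $t$ to be the largest part appearing guarantees $a_{t-1}>0$ and a genuine length-$t$ diamond. The step I expect to be most delicate is the bookkeeping that makes the two binary alternatives line up correctly -- one orientation per strict pair on the diamond side versus one overline per part value on the tuple side -- together with confirming that a gap of size $g$ corresponds to multiplicity $g$ with a single overline rather than $g$ independent choices. As an independent check, the generating function for the target tuples is $\big((q;q)_\infty^{-1}\big)^{k+1}\big((-q;q)_\infty/(q;q)_\infty\big)^{k}=(-q;q)_\infty^{k}/(q;q)_\infty^{2k+1}$, which equals $(q^2;q^2)_\infty^{k}/(q;q)_\infty^{3k+1}$ via $(-q;q)_\infty=(q^2;q^2)_\infty/(q;q)_\infty$, in agreement with \eqref{andrewspaule}.
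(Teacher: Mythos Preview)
Your bijection is correct. The paper's immediate proof of this theorem is just the generating-function manipulation you include as a sanity check at the end; the bijective content is deferred to Section~4, where the refinement (Theorem~\ref{main3}) is proved via the matrix bijection $\Phi$ from Section~2.

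Your gap-to-multiplicity construction and the paper's $\Phi$ are not identical but are tightly related: in the paper's construction one first sorts each pair to get a weakly decreasing sequence $\lambda$, and the consecutive difference at position $p$ within block $j$ of $\lambda$ equals $\alpha^{p+1}_{j}-\alpha^{p+1}_{j+1}$, where $\alpha^{p+1}$ is the paper's $(p{+}1)$-st partition. Reading that same gap as the \emph{multiplicity} of the part $j$, which is exactly what you do, produces the conjugate of the paper's $\alpha^{p+1}$. Both constructions then encode pair orientation by an overline on the corresponding part, and since conjugation is compatible with the overpartition structure, the two bijections differ only by conjugating each component of the $(2k{+}1)$-tuple. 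Your viewpoint makes the weight identity $n=\sum_{i,p}(i{+}1)g_p^{(i)}$ transparent and handles this one theorem very cleanly; the paper's $\Phi$ is less direct here but has the advantage of being a single tool reused verbatim for all of the Schmidt-type theorems in Sections~3--5. The bookkeeping worry you flag (one orientation bit per strict pair versus one overline bit per part value, regardless of the multiplicity $g$) is resolved exactly as you suspected, and the same issue is handled identically in the paper's proof of Theorem~\ref{main3}.
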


The main purpose of this paper is to study the combinatorics of Schmidt type partitions. More specifically, we give generating function formulas for Schmidt $k$-partitions and unrestricted Schmidt $k$-partitions, which generalize the result of Schmidt (Theorem~\ref{gp1}) and the results of Andrews and Paule (Theorems~\ref{gp2} and \ref{gp3}), respectively. We also introduce Schmidt type overpartitions and give analogous results. All our results are proved in a unified combinatorial way.

The rest of this paper is organized as follows. In Section 2, we establish a bijection $\Phi$ between $k\times t$ integer matrices and integer sequences of length $kt$, which palys a central role in our later combinatorial proofs. In Section 3, we focus on the combinatorics of Schmidt $k$-partitions and unrestricted Schmidt $k$-partitions connecting them to $k$-tuple partitions. In Section 4, we discuss the combinatorial properties of $k$-elongated partition diamonds and present a refinement of Theorem \ref{gp4}. In Section 5, we establish overpartition analogues of Schmidt type partition theorems.

\section{The key bijection}

In this section, we establish a bijection $\Phi$ from the set of $k\times t$ integer matrices with the sum of all entries being $n$ to the set of integer sequences $(\lambda_1,\lambda_2,\lambda_3,\ldots,\lambda_{kt})$
of length $kt$ with \[\lambda_1+\lambda_{k+1}+\lambda_{2k+1}+\cdots+\lambda_{(t-1)k+1}=n.\]

Let $A$ be a $k\times t$ matrix whose $(i,j)$-entry is $a_{i,j}$. We now introduce an operation $\phi$ on each entry $a_{i,j}$ which is define by
\begin{align*}
\phi(a_{i,j})=
\left\{
\begin{array}{ll}
\sum\nolimits_{s=1}^ka_{s,j}, & \hbox{if $i=1$;} \\[5pt]
\sum\nolimits_{s=i}^ka_{s,t}, & \hbox{if $j=t$;} \\[5pt]
\sum\nolimits_{s=i}^ka_{s,j}+\sum\nolimits_{s=1}^{i-1}a_{s,j+1}, & \hbox{otherwise.}
\end{array}
\right.
\end{align*}
For $1\leq i\leq k$ and $1\leq j\leq t$, set $\lambda_{(j-1)k+i}=\phi(a_{i,j})$. Define $\Phi(A)$ to be the sequence
\[\lambda:=(\lambda_1,\lambda_2,\ldots,\lambda_{kt}).\]

Clearly, $\Phi(A)$ is an integer sequence of length $kt$. Moreover, for $1\leq j\leq t$,
\[\lambda_{(j-1)k+1}=\phi(a_{1,j})=\sum\limits_{i=1}^ka_{i,j}.\]
We now can conclude that
\begin{align*}
\lambda_1+\lambda_{k+1}+\lambda_{2k+1}+\cdots+\lambda_{(t-1)k+1}
=\sum\limits_{j=1}^{t}\sum\limits_{i=1}^ka_{i,j},
\end{align*}
which is the sum of all entries in $A$.

For example, given a $3\times4$ matrix
\begin{align*}
A=
\begin{pmatrix}
5 & 5 & 4 & 3 \\
7 & 4 & 1 & 0 \\
-5 & -9 & 0 & 1
\end{pmatrix},
\end{align*}
then $\lambda_1=5+7-5=7$, $\lambda_2=7-5+5=7$, $\lambda_3=-5+5+4=4$, $\lambda_4=5+4-9=0$, $\lambda_5=4-9+4=-1$, $\lambda_6=-9+4+1=-4$, $\lambda_7=4+1+0=5$, $\lambda_8=1+0+3=4$, $\lambda_9=0+3+0=3$, $\lambda_{10}=3+0+1=4$, $\lambda_{11}=0+1=1$ and $\lambda_{12}=1$. Therefore,
\[\Phi(A)=(7,7,4,0,-1,-4,5,4,3,4,1,1).\]

We now show that $\Phi$ is invertible. Given an integer sequence \[\lambda=(\lambda_1,\lambda_2,\ldots,\lambda_{kt})\]
of length $kt$, we can recover the $k\times t$ matrix $A$ as follows. First, let
\begin{align*}
a_{k,t}&=\lambda_{kt},\\
a_{k-1,t}&=\lambda_{kt-1}-a_{k,t},\\
&\hspace*{0.2cm}\vdots\\
a_{1,t}&=\lambda_{kt-k+1}-a_{2,t}-a_{3,t}-\cdots-a_{k,t}.
\end{align*}
For $i$ from $k$ to $1$ and $j$ from $t-1$ to $1$, let
\begin{align*}
a_{i,j}=\left\{
\begin{array}{ll}
\lambda_{(j-1)k+1}-\sum\limits_{s=2}^{k}a_{s,j}, & \hbox{if $i$=1;} \\[10pt]
\lambda_{(j-1)k+i}-\sum\limits_{s=i+1}^{k}a_{s,j}-\sum\limits_{s=1}^{i-1}a_{s,j+1}, & \hbox{otherwise.}
\end{array}
\right.
\end{align*}
Let $A$ be the matrix with $(i,j)$-entry being $a_{i,j}$. Then $A$ is a $k\times t$ matrix and $\Phi(A)=\lambda$.

In addition, under the mapping $\Phi$, we can see that
\begin{itemize}
\item for $1\leq j\leq t-1$,
\begin{align*}
\lambda_{(j-1)k+1}-\lambda_{(j-1)k+2}
&=\phi(a_{1,j})-\phi(a_{2,j})\\
&=\sum\limits_{s=1}^ka_{s,j}-\left(\sum\limits_{s=2}^ka_{s,j}+a_{1,j+1}\right)\\
&=a_{1,j}-a_{1,j+1}.
\end{align*}
\item for $i\neq1$ and $j\neq t$,
\begin{align*}
\lambda_{(j-1)k+i}-\lambda_{(j-1)k+(i+1)}
&=\phi(a_{i,j})-\phi(a_{i+1,j})\\
&=\sum\limits_{s=i}^ka_{s,j}+\sum\limits_{s=1}^{i-1}a_{s,j+1}
-\left(\sum\limits_{s=i+1}^ka_{s,j}+\sum\limits_{s=1}^{i}a_{s,j+1}\right)\\
&=a_{i,j}-a_{i,j+1}.
\end{align*}
\item for $1\leq i\leq k$,
\begin{align*}
\lambda_{(t-1)k+i}-\lambda_{(t-1)k+(i+1)}=a_{i,t}.
\end{align*}
\end{itemize}

We now derive the following conclusion, which plays a central role in our later proofs.
\begin{theorem}\label{thmkey}
Suppose that $A$ is a nonnegative integer matrix, then
\begin{description}
\item[(a)] $\Phi(A)$ is a weakly decreasing nonnegative integer sequence if and only if each row of $A$ is weakly
decreasing;
\item[(b)] $\Phi(A)$ is a strictly decreasing nonnegative integer sequence (excluding the zero entries) if and
only if each row of $A$ is strictly decreasing and the zero entries (if exist) only occur consecutively in the last column and  extend to the last position.
\end{description}
\end{theorem}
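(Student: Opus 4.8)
The plan is to derive everything from the three difference identities computed just above the theorem, together with the trivial remark that $\Phi(A)$ is automatically nonnegative once $A$ is: each $\lambda_{(j-1)k+i}=\phi(a_{i,j})$ is a sum of entries of $A$, so $a_{i,j}\ge 0$ for all $i,j$ forces $\lambda_p\ge 0$ for all $p$. Thus in both parts the nonnegativity is free, and the only real content lies in the signs of the consecutive differences and in which of the partial sums $\phi(a_{i,j})$ vanish. Collecting the three bullet computations into one statement, for nonnegative $A$ the consecutive differences of $\lambda=\Phi(A)$ are
\begin{align*}
\lambda_{(j-1)k+i}-\lambda_{(j-1)k+i+1}&=a_{i,j}-a_{i,j+1}\qquad(1\le j\le t-1,\ 1\le i\le k),\\
\lambda_{(t-1)k+i}-\lambda_{(t-1)k+i+1}&=a_{i,t}\qquad(1\le i\le k-1),
\end{align*}
with $\lambda_{tk}=a_{k,t}$; here the case $i=k$ of the first line is the ``crossing'' difference $\lambda_{jk}-\lambda_{jk+1}$ furnished by the second bullet.

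For part \textbf{(a)} I would note that a nonnegative sequence is weakly decreasing if and only if all its consecutive differences are $\ge 0$. The differences from the last building block equal $a_{i,t}\ge 0$ and so impose no condition, whereas the remaining ones equal $a_{i,j}-a_{i,j+1}$ for $1\le j\le t-1$. Hence $\Phi(A)$ is weakly decreasing exactly when $a_{i,j}\ge a_{i,j+1}$ for every $i$ and every $j\le t-1$, which is precisely the assertion that each row of $A$ is weakly decreasing. This settles both directions at once.

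For part \textbf{(b)} I read ``strictly decreasing (excluding the zero entries)'' as: $\Phi(A)$ is weakly decreasing, its positive entries strictly decrease, and its zero entries form a terminal run sitting inside the last building block; the theorem then says this run matches a bottom-justified block of zeros in the final column. For the direction from $A$ to $\Phi(A)$, strict monotonicity of the rows makes every difference $a_{i,j}-a_{i,j+1}$ with $j\le t-1$ strictly positive, and since $a_{i,j}>a_{i,j+1}\ge 0$ forces $a_{i,j}>0$, the first $(t-1)k$ entries of $\Phi(A)$ are positive and the sequence strictly decreases through position $(t-1)k+1$. In the last block $\lambda_{(t-1)k+i}=\sum_{s=i}^{k}a_{s,t}$, so if the zeros of $A$ occupy $a_{r,t}=\cdots=a_{k,t}=0$ these entries strictly decrease (with difference $a_{i,t}>0$) for $i<r$ and vanish for $i\ge r$, giving exactly the required shape.

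For the converse I would first invoke part (a) to get weakly decreasing rows. Because the terminal zero run lies in the last block we have $\lambda_{(t-1)k}>0$, so the first $(t-1)k$ entries are positive; strict decrease among positive entries then forces each difference $a_{i,j}-a_{i,j+1}$ with $j\le t-1$ to be strictly positive, i.e.\ every row strictly decreases and, in particular, $a_{i,j}>0$ whenever $j\le t-1$, so no zero can hide outside the last column. Within the last block, $a_{i,t}=0$ gives $\lambda_{(t-1)k+i}=\lambda_{(t-1)k+i+1}$, which by the ``excluding zeros'' hypothesis must equal $0$; hence $a_{i,t}=0$ propagates to $a_{i,t}=a_{i+1,t}=\cdots=a_{k,t}=0$, so the zeros are bottom-justified and consecutive. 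The step I expect to be the genuine obstacle is precisely this last bookkeeping: verifying that terminal zeros of $\Phi(A)$ can be produced only by a bottom-justified zero block of the final column, never by hidden zeros in earlier columns, and keeping track that this run cannot be longer than a single building block. This is where the meaning of ``excluding the zero entries'' has to be pinned down, since a sequence with strictly decreasing positive part but more than $k$ trailing zeros is not the image of any $A$ of the stated form, and the equivalence must be read so as to exclude it.
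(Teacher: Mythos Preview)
Your argument is correct and is exactly what the paper has in mind: the theorem is stated immediately after the three difference identities with the phrase ``We now derive the following conclusion,'' and no separate proof is given, so the paper is treating it as an immediate consequence of those identities together with nonnegativity; you have simply made this derivation explicit. Your caveat about the precise reading of ``excluding the zero entries'' in part~(b) is a fair observation---the paper never addresses it---but your interpretation (at most $k$ trailing zeros, hence confined to the last building block) is the one forced by how the theorem is applied in Sections~3--5, so no gap arises.
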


\section{Schmidt type partition theorems}

In this section, we aim to present the general form of Theorems \ref{gp1}, \ref{gp2} and \ref{gp3}. All results are proved bijectively in a unified manner.

\subsection{Schmidt $k$-partition theorem}
\begin{theorem}\label{main1}
For given positive integers $n,k,t,r$ with $1\leq r\leq k$, let $p(n,k,t,r)$ be the number of $k$-tuple partitions $(\alpha^1,\alpha^2,\ldots,\alpha^k)$ of $n$ where each $\alpha^i$ is a strict partition, and $\alpha^1,\alpha^2,\ldots,\alpha^r$ have length $t$ and $\alpha^{r+1},\alpha^{r+2},\ldots,\alpha^{k}$ have length $t-1$. Let $q(n,k,t,r)$ be the number of Schmidt $k$-partitions of $n$ with $(t-1)k+r$ parts. Then we have \[p(n,k,t,r)=q(n,k,t,r).\]
\end{theorem}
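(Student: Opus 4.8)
The plan is to realize both sides as images of one family of $k\times t$ matrices under the bijection $\Phi$ of Section~2, so that Theorem~\ref{thmkey}(b) does essentially all the work. A $k$-tuple partition counted by $p(n,k,t,r)$ consists of $k$ strict partitions, the first $r$ of length $t$ and the remaining $k-r$ of length $t-1$; I would encode it as the $k\times t$ matrix $A=(a_{i,j})$ whose $i$-th row is $\alpha^i$, padding each short partition $\alpha^{r+1},\ldots,\alpha^k$ with a single trailing $0$ in column $t$. Then $A$ is a nonnegative integer matrix whose every row is strictly decreasing, whose only zero entries are $a_{r+1,t},\ldots,a_{k,t}$, and whose entries sum to $|\alpha^1|+\cdots+|\alpha^k|=n$.

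First I would check that $A$ lies in exactly the class characterized by Theorem~\ref{thmkey}(b): its rows are strictly decreasing, and its zeros occupy a consecutive block at the bottom of the last column, extending to the last position $a_{k,t}$. Hence $\lambda=\Phi(A)$ is strictly decreasing once its zero entries are deleted, and those zeros form a suffix. To pin down that suffix, I would use the last-column values $\lambda_{(t-1)k+i}=\phi(a_{i,t})=\sum_{s=i}^k a_{s,t}$ coming straight from the definition of $\phi$: since $a_{1,t},\ldots,a_{r,t}>0$ and $a_{r+1,t}=\cdots=a_{k,t}=0$, the entry $\lambda_{(t-1)k+i}$ vanishes precisely for $i>r$. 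Thus $\lambda$ has exactly $k-r$ trailing zeros and $(t-1)k+r$ positive parts.

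The resulting strict partition $(\lambda_1,\ldots,\lambda_{(t-1)k+r})$ is the desired Schmidt $k$-partition: it has $(t-1)k+r$ parts, and since the largest Schmidt position $(t-1)k+1$ is at most $(t-1)k+r$ (this is where the hypothesis $r\ge 1$ enters), every term of the Schmidt sum is a positive part, while the defining weight identity of $\Phi$ gives
\[
\lambda_1+\lambda_{k+1}+\cdots+\lambda_{(t-1)k+1}=\sum_{j=1}^t\sum_{i=1}^k a_{i,j}=n.
\]
Hence we obtain an object counted by $q(n,k,t,r)$.

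For the reverse map I would take a Schmidt $k$-partition $\mu$ with $(t-1)k+r$ parts, append $k-r$ zeros to form a length-$kt$ sequence with a zero suffix, and apply $\Phi^{-1}$. By Theorem~\ref{thmkey}(b) the recovered matrix has strictly decreasing rows with all zeros confined to a bottom block of the last column, and the same last-column computation forces that block to be exactly $a_{r+1,t},\ldots,a_{k,t}$; reading off the rows (and dropping the trailing zero from the short ones) returns a $k$-tuple partition of the prescribed shape. As these two maps are mutually inverse, $p(n,k,t,r)=q(n,k,t,r)$. The one point I would treat with care rather than as routine is the bookkeeping that turns the ``$k-r$ short rows'' into ``$k-r$ trailing zeros of $\lambda$'', and hence into ``$(t-1)k+r$ parts''; the rest is a direct application of Theorem~\ref{thmkey}(b) together with the weight-preserving property of $\Phi$.
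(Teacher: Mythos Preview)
Your proposal is correct and follows essentially the same route as the paper: encode the $k$-tuple of strict partitions as a $k\times t$ matrix with trailing zeros in rows $r+1,\ldots,k$, apply $\Phi$, and invoke Theorem~\ref{thmkey}(b) to conclude that the image is a strict partition with exactly $(t-1)k+r$ positive parts; the inverse is handled by appending $k-r$ zeros and applying $\Phi^{-1}$. Your explicit last-column computation $\lambda_{(t-1)k+i}=\sum_{s=i}^k a_{s,t}$ is a nice way to make the ``$k-r$ trailing zeros'' bookkeeping transparent, but otherwise the argument matches the paper's.
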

\begin{proof}
Let $(\alpha^1,\alpha^2,\ldots,\alpha^k)$ be a $k$-tuple partition counted by $p(n,k,t,r)$ with
\[\alpha^i=(\alpha^i_1,\alpha^i_2,\ldots,\alpha^i_t)\]
where $\alpha^i_t=0$ for $r+1\leq i\leq k$.

We now construct a $k\times t$ matrix $A=(a_{i,j})$ with $a_{i,j}=\alpha^i_j$, i.e., the $i$-th row of $A$ coincides with the partition $\alpha^i$. Applying the algorithm $\Phi$ to $A$, we get a nonnegative integer sequence
\[(\lambda_1,\lambda_2,\ldots,\lambda_{kt}).\]

We claim that $\lambda_1>\lambda_2>\cdots>\lambda_{(t-1)k+r}>0$ and
\[\lambda_{(t-1)k+(r+1)}=\lambda_{(t-1)k+(r+2)}=\cdots=\lambda_{kt}=0.\]
It follows from the definition of $(\alpha^1,\alpha^2,\ldots,\alpha^k)$ that the last $k-r$ entries in the last column of $A$ are equal to zero and all the other entries of $A$ are positive. Therefore, all $\lambda_i$ are greater than $0$ except $\lambda_{(t-1)k+(r+1)},\lambda_{(t-1)k+(r+2)},\ldots,\lambda_{kt}$. Since each $\alpha^i$ is a strict partition, then $A$ is strictly decreasing in rows, which ensures that
\[\lambda_1>\lambda_2>\cdots>\lambda_{(t-1)k+r}\]
by Theorem \ref{thmkey}.

It is clear that $\lambda_1+\lambda_{k+1}+\cdots+\lambda_{(t-1)k+1}=n$. Thus, $(\lambda_1, \lambda_2,\ldots\lambda_{(t-1)k+r})$ is a Schmidt $k$-partition of $n$ with exactly $(t-1)k+r$ parts.

Conversely, given a Schmidt $k$-partition $(\lambda_1,\lambda_2,\ldots,\lambda_{(t-1)k+r})$ enumerated by $q(n,k,t,r)$, we obtain a nonnegative integer sequence of length $kt$
\[(\lambda_1,\lambda_2,\ldots,\lambda_{(t-1)k+r},0,0,\ldots,0)\]
by appending $k-r$ zeroes. Then the inverse of this sequence under the mapping $\Phi$ is a $k\times t$ nonnegative matrix $A$ whose zero entries are all collected in the last $k-r$ positions of the last column. Due to Theorem \ref{thmkey}, we can view each row of $A$ as a strict partition. Furthermore, the first $k$ partitions have length $t$ and the others have length $t-1$.
\end{proof}

\begin{corollary}\label{c1main1}
The number of partitions of $n$ with Durfee square size $t$ is equal to the number of Schmidt $2$-partitions of $n$ with $2t$ or $2t-1$ parts.
\end{corollary}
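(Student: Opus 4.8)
The plan is to combine Theorem~\ref{main1} (specialized to $k=2$) with the classical Durfee square decomposition. Taking $k=2$ in Theorem~\ref{main1}, the quantity $q(n,2,t,r)$ counts Schmidt $2$-partitions of $n$ with $(t-1)\cdot 2 + r$ parts, so $r=1$ and $r=2$ produce exactly $2t-1$ and $2t$ parts respectively. Hence the number of Schmidt $2$-partitions of $n$ with $2t$ or $2t-1$ parts equals $q(n,2,t,1)+q(n,2,t,2)=p(n,2,t,1)+p(n,2,t,2)$, and it suffices to show that this sum equals the number of partitions of $n$ with Durfee square size $t$. Unwinding the definition, $p(n,2,t,1)+p(n,2,t,2)$ is the number of pairs $(\alpha^1,\alpha^2)$ of strict partitions in which $\alpha^1$ has length exactly $t$ and $\alpha^2$ has length $t$ or $t-1$.

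Next I would set up a weight-preserving bijection between such pairs and partitions $\lambda$ of $n$ whose Durfee square has side $t$. Given such a $\lambda$, decompose it about its $t\times t$ Durfee square into the partition $\mu=(\lambda_1-t,\ldots,\lambda_t-t)$ lying to the right (which has at most $t$ parts) and the partition $\nu=(\lambda_{t+1},\lambda_{t+2},\ldots)$ lying below (whose parts are all at most $t$), so that $n=t^2+|\mu|+|\nu|$. I would then add staircases: after padding $\mu$ to length $t$ with zero parts, set $\alpha^1_i=\mu_i+(t-i+1)$, and after conjugating $\nu$ to $\nu'$ (which then has at most $t$ parts), padding to length $t$, set $\alpha^2_i=\nu'_i+(t-i)$, discarding a trailing zero part if one occurs.

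The verification then breaks into three routine checks. First, strictness and length: the gaps $\alpha^1_i-\alpha^1_{i+1}=(\mu_i-\mu_{i+1})+1\geq 1$ and $\alpha^2_i-\alpha^2_{i+1}=(\nu'_i-\nu'_{i+1})+1\geq 1$ force both sequences to be strict, while $\alpha^1_t=\mu_t+1\geq 1$ gives $\alpha^1$ length exactly $t$, and $\alpha^2_t=\nu'_t$ is positive precisely when $\nu'$ has a full $t$ parts, producing length $t$ or $t-1$ exactly as required. Second, weight: the two staircases contribute $\binom{t+1}{2}+\binom{t}{2}=t^2$, so $|\alpha^1|+|\alpha^2|=|\mu|+|\nu|+t^2=n$. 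Third, invertibility: subtracting the respective staircases recovers $\mu$ and $\nu'$, conjugating $\nu'$ returns $\nu$, and $\lambda$ is rebuilt from $(\mu,\nu)$; every step is manifestly reversible.

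The main obstacle I anticipate is bookkeeping the asymmetry between $\alpha^1$ and $\alpha^2$: the construction must funnel the arm partition $\mu$ into a partition of full length $t$ while allowing the leg partition $\nu$ to yield either length $t$ or $t-1$. The device that makes this work is the deliberate choice of two different staircases, $(t,t-1,\ldots,1)$ for $\alpha^1$ and $(t-1,t-2,\ldots,0)$ for $\alpha^2$: the former forces all parts of $\alpha^1$ to be positive, whereas the trailing $0$ of the latter survives or vanishes according to whether $\nu$ attains largest part $t$, which is exactly the mechanism splitting the count into the $2t$- and $(2t-1)$-part cases. Confirming that these two staircases together account for precisely $t^2$ (and not $t^2\pm t$) is the one point needing care; as a sanity check one may instead verify the generating function identity $\frac{q^{t^2}}{(q;q)_t(q;q)_{t-1}}+\frac{q^{t(t+1)}}{(q;q)_t^2}=\frac{q^{t^2}}{(q;q)_t^2}$, which is the enumerative shadow of the bijection.
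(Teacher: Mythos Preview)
Your proof is correct and follows essentially the same approach as the paper: reduce to Theorem~\ref{main1} with $k=2$, then construct the needed bijection between partitions of Durfee size $t$ and pairs $(\alpha^1,\alpha^2)$ of strict partitions of lengths $t$ and $t$ or $t-1$ via a Frobenius-style diagonal decomposition. The only cosmetic difference is that the paper reads $\alpha$ from the columns below (and including) the diagonal and $\beta$ from the rows to its right, whereas your staircase construction in effect swaps these roles (your $(\alpha^1_i,\alpha^2_i)=(\lambda_i-i+1,\lambda'_i-i)$ versus the paper's $(\alpha_i,\beta_i)=(\lambda'_i-i+1,\lambda_i-i)$); the two bijections differ by conjugating $\lambda$ and are equally valid.
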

\begin{proof}
Given a partition $\lambda$ of $n$ with Durfee square size $t$, we decompose $\lambda$ into two strict partitions as follows. Let $\alpha$ be the partition represented below the main diagonal of the Ferrers diagram, where we read the parts from left to right and the main diagonal is included. Let $\beta$ be the partition located to the right of the main diagonal of the Ferrers diagram, where we read from top to bottom. Then $\alpha$ is a strict partition of length $t$, and $\beta$ is a strict partition whose length is $t$ or $t-1$ depending on the $t$-th part of $\lambda$ is greater than $t$ or equal to $t$.

If the length of $\beta$ is $t$, then the bijection $\Phi$ maps the bipartition $(\alpha,\beta)$ of $n$ to a Schmidt $2$-partition of $n$ with length $2t$; otherwise, it corresponds to a Schmidt $2$-partition of length $2t-1$.
\end{proof}

\begin{remark}
It is easy to see that Corollary \ref{c1main1} is a refinement of Theorem \ref{gp1} according to the size of Durfee square.
\end{remark}

To prove Theorem \ref{gp3}, we need a simple auxiliary result.
\begin{lemma}\label{l1gp3}
The number of triple partitions $(\alpha,\beta,\gamma)$ of $n$ such that $\alpha$ is a $2$-distinct partition of length $t$, $\beta$ is a strict partition of length $t$ and $\gamma$ is an ordinary partition of length at most $t$ is equal to the number of triple partitions $(\mu,\nu,\omega)$ of $n$ such that $\mu$ is a strict partition of length $t$, $\nu$ is a strict partition of length $t$ and $\omega$ is a strict partition of length $t$ or $t-1$.
\end{lemma}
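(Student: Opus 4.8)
The plan is to build an explicit weight-preserving bijection between the two sets of triple partitions by adding or subtracting the staircase $\delta=(t-1,t-2,\ldots,1,0)$ of length $t$ componentwise. The idea is suggested by comparing what each coordinate contributes: $\beta$ and $\nu$ play identical roles (both strict of length $t$), so I will leave that coordinate untouched; the transformation $\alpha\mapsto\mu$ should turn a $2$-distinct partition into a strict one, and $\gamma\mapsto\omega$ should turn an unrestricted partition of length at most $t$ into a strict partition of length $t$ or $t-1$. Since subtracting $\delta$ lowers a weight by $\binom{t}{2}=\sum_{i=1}^t(t-i)$ while adding $\delta$ raises it by the same amount, performing the two opposite shifts simultaneously keeps the total weight fixed.

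Concretely, I would first send the $2$-distinct partition $\alpha=(\alpha_1,\ldots,\alpha_t)$ to $\mu$ with $\mu_i=\alpha_i-(t-i)$. Because $\alpha_i-\alpha_{i+1}\ge 2$, we get $\mu_i-\mu_{i+1}=\alpha_i-\alpha_{i+1}-1\ge 1$, so $\mu$ is strict, and $\mu_t=\alpha_t\ge 1$ forces length exactly $t$; the inverse adds $\delta$ back and restores the gap of at least $2$. Next I set $\nu=\beta$. Finally, padding $\gamma$ with zeros to length $t$ and setting $\omega_i=\gamma_i+(t-i)$ produces $\omega_i-\omega_{i+1}=\gamma_i-\gamma_{i+1}+1\ge 1$, hence $\omega$ is strict; here $\omega_{t-1}=\gamma_{t-1}+1\ge 1$ always, while $\omega_t=\gamma_t$ is positive exactly when $\gamma$ genuinely has $t$ parts, which is what produces the length $t$ versus $t-1$ dichotomy.

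The one step needing real care is the $\gamma\leftrightarrow\omega$ correspondence and, in particular, showing it maps \emph{onto} strict partitions of length exactly $t$ or $t-1$. For the reverse map I would pad a strict $\omega$ with a trailing zero if its length is $t-1$ and set $\gamma_i=\omega_i-(t-i)$; strictness of $\omega$ gives $\gamma_i-\gamma_{i+1}=\omega_i-\omega_{i+1}-1\ge 0$, and $\gamma_t=\omega_t\ge 0$, so $\gamma$ is a bona fide partition of length at most $t$. The reason no shorter $\omega$ can arise is that length at most $t-2$ would force $\omega_{t-1}=0$ and hence $\gamma_{t-1}=-1$, which is impossible; this is exactly the bookkeeping that pins down the image. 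Assembling the three coordinates, the map $(\alpha,\beta,\gamma)\mapsto(\mu,\nu,\omega)$ is a bijection, and the weight identity $|\mu|+|\nu|+|\omega|=\bigl(|\alpha|-\binom{t}{2}\bigr)+|\beta|+\bigl(|\gamma|+\binom{t}{2}\bigr)=|\alpha|+|\beta|+|\gamma|$ will complete the proof.
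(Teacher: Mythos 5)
Your proposal is correct and is essentially the paper's own proof: both subtract the staircase $(t-1,t-2,\ldots,1,0)$ from $\alpha$, add it to $\gamma$ (padded to length $t$), and keep $\beta=\nu$ fixed. Your write-up just carries out the verification of strictness, the length dichotomy for $\omega$, and the weight bookkeeping in more detail than the paper does.
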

\begin{proof}
Given a triple partition $(\alpha,\beta,\gamma)$, let $\mu$ be the partition obtained from $\alpha$ by removing the triangle partition $(t-1,t-2,\ldots,1,0)$, and define $\omega$ to be the partition obtained by adding the triangle partition $(t-1,t-2,\ldots,1,0)$ to $\gamma$. Set $\nu=\beta$. This operation is clearly invertible.

It is easy to see that $\mu$ is a strict partition of length $t$ since $\alpha$ is a $2$-distinct partition of length $t$. If the length of $\gamma$ is $t$, then $\omega$ is a strict partition of length $t$; otherwise, $\omega$ is a strict partition of length $t-1$.
\end{proof}

As a consequence of Lemma \ref{l1gp3} and Theorem \ref{main1}, we can state Theorem \ref{gp3} more precisely as follows.
\begin{corollary}
The number of triple partitions $(\alpha,\beta,\gamma)$ of $n$ where $\alpha$ is a $2$-distinct partition of length $t$, $\beta$ is a strict partition of length $t$ and $\gamma$ is an ordinary partition of length $t$ equals the number of Schmidt $3$-partitions of $n$ with $3t$ parts; if the length of $\gamma$ is less than $t$, then it is equal to the number of Schmidt $3$-partitions of $n$ with $3t-1$ parts.
\end{corollary}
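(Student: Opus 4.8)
The plan is to compose the two bijections already in hand and to track the single length parameter carefully as it passes through both. First I would apply Lemma~\ref{l1gp3} to rewrite the triple partition $(\alpha,\beta,\gamma)$ as a triple $(\mu,\nu,\omega)$ in which $\mu$ and $\nu$ are strict of length $t$, while $\omega$ is strict of length $t$ or $t-1$. The essential bookkeeping point, which the proof of Lemma~\ref{l1gp3} already records, is that the length of $\omega$ encodes whether $\gamma$ had length exactly $t$ or strictly less: adding the triangle $(t-1,t-2,\ldots,1,0)$ to $\gamma$ leaves a positive final part precisely when $\ell(\gamma)=t$, so that $\ell(\omega)=t$, whereas if $\ell(\gamma)<t$ the final part of $\omega$ is $0$ and $\ell(\omega)=t-1$.

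Next I would invoke Theorem~\ref{main1} with $k=3$, setting $(\alpha^1,\alpha^2,\alpha^3)=(\mu,\nu,\omega)$. In the regime $\ell(\omega)=t$, all three partitions are strict of length $t$, which is the instance $r=3$; the theorem then matches these triples with Schmidt $3$-partitions having $(t-1)\cdot 3 + 3 = 3t$ parts. In the regime $\ell(\omega)=t-1$, the first two partitions have length $t$ and the third has length $t-1$, which is the instance $r=2$; the theorem matches these with Schmidt $3$-partitions having $(t-1)\cdot 3 + 2 = 3t-1$ parts.

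Chaining the two bijections then yields both assertions at once: triples $(\alpha,\beta,\gamma)$ with $\ell(\gamma)=t$ correspond to Schmidt $3$-partitions with $3t$ parts, and triples with $\ell(\gamma)<t$ correspond to those with $3t-1$ parts, with weight preserved throughout since both ingredients are weight preserving. There is no analytic difficulty here; the only genuine care needed is to confirm that the hypothesis ``$\gamma$ of length at most $t$'' in Lemma~\ref{l1gp3} splits cleanly into the two subcases $\ell(\gamma)=t$ and $\ell(\gamma)<t$ with neither overlap nor gap, and that these subcases align with $r=3$ and $r=2$ respectively in Theorem~\ref{main1}. I expect this length-matching to be the only step warranting a second look, and it goes through directly.
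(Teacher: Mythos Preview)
Your proposal is correct and follows exactly the approach the paper takes: the corollary is stated there simply ``as a consequence of Lemma~\ref{l1gp3} and Theorem~\ref{main1}'' with no further proof, and your composition of the two bijections together with the $r=3$ versus $r=2$ case split is precisely the intended argument. The length-tracking you flag is indeed the only point requiring care, and you have handled it correctly.
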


\subsection{Unrestricted Schmidt $k$-partition theorem}
\begin{theorem}\label{main2}
For given positive integers $n,k,t,r$ with $1\leq r\leq k$, let $f(n,k,t,r)$ be the number of $k$-tuple partitions $(\alpha^1,\alpha^2,\ldots,\alpha^k)$ of $n$ where $\max\{\ell(\alpha^1),\ell(\alpha^2),\ldots,\ell(\alpha^k)\}=t$ and $r$ is the largest integer such that $\ell(\alpha^r)=t$. Let $g(n,k,t,r)$ be the number of unrestricted Schmidt $k$-partitions of $n$ with length being equal to $(t-1)k+r$. Then \[f(n,k,t,r)=g(n,k,t,r).\]
\end{theorem}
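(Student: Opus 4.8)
The plan is to mirror the proof of Theorem~\ref{main1}, replacing the use of part~(b) of Theorem~\ref{thmkey} by part~(a), since here the component partitions $\alpha^i$ are ordinary (weakly decreasing) rather than strict. The bijection $\Phi$ and its inverse do all the structural work; the only genuinely new ingredient is the correct determination of the length of the resulting Schmidt partition in terms of $r$.

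First I would set up the forward map. Given a $k$-tuple partition $(\alpha^1,\ldots,\alpha^k)$ counted by $f(n,k,t,r)$, pad each $\alpha^i$ with trailing zeros so that $\alpha^i=(\alpha^i_1,\ldots,\alpha^i_t)$ has length exactly $t$; this is legitimate because $\max_i\ell(\alpha^i)=t$. Form the $k\times t$ matrix $A=(a_{i,j})$ with $a_{i,j}=\alpha^i_j$, so that each row is weakly decreasing and nonnegative and the entries sum to $n$. By Theorem~\ref{thmkey}(a), $\Phi(A)=(\lambda_1,\ldots,\lambda_{kt})$ is a weakly decreasing nonnegative integer sequence, and by the construction of $\Phi$ we have $\lambda_1+\lambda_{k+1}+\cdots+\lambda_{(t-1)k+1}=n$, so after deleting trailing zeros we obtain an unrestricted Schmidt $k$-partition of $n$.

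The crux is to pin down its length as exactly $(t-1)k+r$. Here I would use the last-column formula $\lambda_{(t-1)k+i}=\phi(a_{i,t})=\sum_{s=i}^k a_{s,t}$. Since $a_{s,t}>0$ precisely when $\ell(\alpha^s)=t$, and since $r$ is by definition the largest index with $\ell(\alpha^r)=t$, the sum $\sum_{s=i}^k a_{s,t}$ is positive if and only if $i\le r$; in particular $\lambda_{(t-1)k+r}>0$ while $\lambda_{(t-1)k+r+1}=0$. Because $\lambda$ is weakly decreasing, this forces $\lambda_1\ge\cdots\ge\lambda_{(t-1)k+r}>0$ and $\lambda_{(t-1)k+r+1}=\cdots=\lambda_{kt}=0$, so the number of positive parts is exactly $(t-1)k+r$, as required.

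For the converse I would reverse each step. Starting from an unrestricted Schmidt $k$-partition $(\lambda_1,\ldots,\lambda_{(t-1)k+r})$ of length $(t-1)k+r$, append $k-r$ zeros and apply $\Phi^{-1}$ to obtain a $k\times t$ nonnegative matrix $A$; Theorem~\ref{thmkey}(a) guarantees each row is weakly decreasing, hence a (padded) ordinary partition. To verify the length condition I would again invoke $\lambda_{(t-1)k+i}=\sum_{s=i}^k a_{s,t}$: the relations $\lambda_{(t-1)k+r}>0$ and $\lambda_{(t-1)k+r+1}=0$ yield $a_{r,t}>0$ and $a_{s,t}=0$ for $s>r$, that is $\ell(\alpha^r)=t$ and $\ell(\alpha^s)<t$ for $s>r$, so $\max_i\ell(\alpha^i)=t$ with $r$ the largest index attaining it. I expect the main obstacle to be precisely this bookkeeping of the zero pattern in the final block of $k$ coordinates; once that is in hand, everything else follows formally from $\Phi$ and Theorem~\ref{thmkey}(a) exactly as in Theorem~\ref{main1}.
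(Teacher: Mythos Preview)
Your proposal is correct and follows essentially the same route as the paper: form the $k\times t$ matrix from the padded $\alpha^i$, apply $\Phi$, and invoke Theorem~\ref{thmkey}(a) for the weak-decrease, with the inverse handled symmetrically. Your treatment is in fact slightly more explicit than the paper's, which simply asserts $\lambda_{(t-1)k+r}>0$ and $\lambda_{(t-1)k+r+1}=\cdots=\lambda_{kt}=0$ from $a_{r,t}>0$, $a_{r+1,t}=\cdots=a_{k,t}=0$ and then omits the converse as ``very similar to that of Theorem~\ref{main1}''; your use of the last-column identity $\lambda_{(t-1)k+i}=\sum_{s=i}^k a_{s,t}$ makes the length bookkeeping transparent in both directions.
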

\begin{proof}
Let $(\alpha^1,\alpha^2,\ldots,\alpha^k)$ be a $k$-tuple partition of $n$ counted by $f(n,k,t,r)$. For each $i$, we can view $\alpha^i$ as a partition of length $t$ with zeroes permitted. Define $A=(a_{i,j})$ to be the $k\times t$ matrix whose $i$-th row corresponds to the partition $\alpha^i$. Applying the algorithm $\Phi$ to $A$, we get a nonzero sequence
\[\Phi(A)=(\lambda_1,\lambda_2,\ldots,\lambda_{kt})\]
of length $kt$.

Since $a_{r,t}>0$ and $a_{r+1,t}=a_{r+2,t}=\cdots=a_{k,t}=0$, we have $\lambda_{(t-1)k+r}>0$ and \[\lambda_{(t-1)k+r+1}=\lambda_{(t-1)k+r+2}=\cdots=\lambda_{kt}=0.\]
It follows from Theorem \ref{thmkey} that $\lambda$ is a weakly decreasing sequence since $A$ is weakly decreasing in rows. In addition, it is clear that $\lambda_{1}+\lambda_{k+1}+\lambda_{2k+1}+\cdots+\lambda_{(t-1)k+1}=n$.

We now can see that
\[(\lambda_1,\lambda_2,\ldots,\lambda_{(t-1)k+r})\]
is a Schmidt $k$-partition of $n$ with exactly $(t-1)k+r$ parts.

The above process is invertible, whose proof is very similar to that of Theorem \ref{main1} and is omitted here.
\end{proof}

\begin{remark}
Since the largest part of each $\alpha^i$ is recorded in the first column of $A$, and \[\max\{\ell(\alpha^1),\ell(\alpha^2),\ldots,\ell(\alpha^k)\}\]
is equal to the number of columns of $A$, we can derive Theorem 2 in \cite{Ji21} easily and immediately from our proof.
\end{remark}

If we sum over $t\geq1$ and $1\leq r\leq k$, we can obtain the following result immediately.
\begin{corollary}\label{c1main2}
The number of unrestricted Schmidt $k$-partitions of $n$ is equal to the number of $k$-tuple partitions of $n$.
\end{corollary}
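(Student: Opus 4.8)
The plan is to derive Corollary~\ref{c1main2} directly from Theorem~\ref{main2} by summing the bijective correspondence over all admissible values of the length parameters. First I would observe that the two parameters $t$ and $r$ appearing in Theorem~\ref{main2} precisely partition both sides of the desired identity into disjoint classes. On the right-hand side, every unrestricted Schmidt $k$-partition of $n$ has a well-defined (finite) number of parts, and since $(t-1)k+r$ ranges over all positive integers as $t\geq 1$ and $1\leq r\leq k$ vary (this is just the division algorithm: each positive integer $m$ can be written uniquely as $m=(t-1)k+r$ with $t\geq 1$ and $1\leq r\leq k$), summing $g(n,k,t,r)$ over all such pairs counts every unrestricted Schmidt $k$-partition of $n$ exactly once.

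Next I would verify that the left-hand count is similarly exhaustive and non-overlapping. For any $k$-tuple partition $(\alpha^1,\ldots,\alpha^k)$ of $n$ in which at least one $\alpha^i$ is nonempty, set $t=\max\{\ell(\alpha^1),\ldots,\ell(\alpha^k)\}\geq 1$ and let $r$ be the largest index with $\ell(\alpha^r)=t$; this assigns each such tuple to exactly one class counted by $f(n,k,t,r)$. Conversely these classes are mutually exclusive because $t$ and $r$ are functions of the tuple. Hence
\[
\sum_{t\geq 1}\sum_{r=1}^{k} f(n,k,t,r)
\]
counts all $k$-tuple partitions of $n$ (the empty tuple, corresponding to $n=0$, is excluded as in the statement, since $n$ is assumed positive). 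Applying the equality $f(n,k,t,r)=g(n,k,t,r)$ from Theorem~\ref{main2} termwise and summing then yields the corollary.

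The only point requiring genuine care—and thus the step I would treat most carefully—is confirming that the parametrization by $(t,r)$ gives a genuine bijective partition of each side with no double-counting and no omissions, i.e.\ that the map sending a $k$-tuple partition to its pair $(t,r)$ is well-defined and surjective onto the index set $\{(t,r): t\geq 1,\ 1\leq r\leq k\}$, and likewise that $m\mapsto (t,r)$ via $m=(t-1)k+r$ is a bijection from $\mathbb{Z}_{\geq 1}$ onto that index set. Both are elementary, but they are exactly what makes the summation legitimate. Everything else is a direct consequence of Theorem~\ref{main2}, so no further combinatorial construction is needed; the corollary is obtained purely by aggregating the refined bijection over its parameters.
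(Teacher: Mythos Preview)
Your proposal is correct and follows exactly the paper's approach: the paper simply states that the corollary is obtained by summing Theorem~\ref{main2} over all $t\geq 1$ and $1\leq r\leq k$. Your write-up is in fact more careful than the paper's one-line justification, spelling out explicitly why the $(t,r)$-parametrization partitions both sides without overlap or omission.
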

\begin{remark}
Setting $k=2$, Corollary \ref{c1main2} reduces to Theorem \ref{gp2}.
\end{remark}

\section{Partition diamonds}

\begin{theorem}\label{main3}
For given integers $n,k,t,r$ with $1\leq r\leq 2k+1$, let $h(n,k,t,r)$ be the number of $(2k+1)$-tuple partitions $(\alpha^{1},\alpha^2,\ldots,\alpha^{2k+1})$ of $n$ where $\alpha^{2},\alpha^{4},\ldots,\alpha^{2k}$ are overpartitions while the others are ordinary partitions, $\max\{\ell(\alpha^1),\ell(\alpha^2),\ldots,\ell(\alpha^{2k+1})\}=t$ and $r$ is the largest integer such that $\ell(\alpha^r)=t$. Let $d(n,k,t,r)$ be the number of $k$-elongated partition diamonds of $n$ with length $t$ and $(t-1)(2k+1)+r$ nonzero elements. Then $h(n,k,t,r)=d(n,k,t,r)$.
\end{theorem}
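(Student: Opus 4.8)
The plan is to combine the bijection $\Phi$ of Theorem~\ref{thmkey} with a pair-swapping operation inside each building block, where the swaps are governed by the overlines of the even-indexed partitions. First I would decompose the input: given a $(2k+1)$-tuple partition counted by $h(n,k,t,r)$, each odd-indexed $\alpha^{2m-1}$ is already an ordinary partition, while each even-indexed $\alpha^{2m}$ is an overpartition, which I write as its underlying ordinary partition $\bar\alpha^{2m}$ together with the set of overlined parts (recalling that an overline may occur only on the last occurrence of a part value). Viewing each $\bar\alpha^{2m}$ and each odd $\alpha^{2m-1}$ as a weakly decreasing length-$t$ sequence (padding with zeros), I form the $(2k+1)\times t$ matrix $A=(a_{i,j})$ whose $i$-th row is this sequence. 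Since $\max_i\ell(\alpha^i)=t$, the last column of $A$ is nonzero, and $\sum_{i,j}a_{i,j}=n$.

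Next I would apply $\Phi$ to $A$. By Theorem~\ref{thmkey}(a) the sequence $\lambda=\Phi(A)$ is weakly decreasing, so it automatically satisfies every $\succeq$ relation in \eqref{buildingblock} and is therefore a (sorted) $k$-elongated partition diamond; its heads $\lambda_{(j-1)(2k+1)+1}=\phi(a_{1,j})$ are exactly the column sums of $A$, so condition~(i) holds with value $n$. In the $(j{-}1)$-st block the $m$-th pair is $\bigl(\phi(a_{2m,j}),\phi(a_{2m+1,j})\bigr)$, and since $\lambda$ is weakly decreasing the even entry is the maximum and the odd entry the minimum of that pair. I would then read off the swaps from the overlines: whenever the last occurrence of a part value of $\bar\alpha^{2m}$, occurring at position $j$, is overlined, I swap the two entries of the $m$-th pair in the $(j{-}1)$-st block.

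The crux is a threefold verification. (a) Swapping preserves validity: each relation in \eqref{buildingblock} compares a pair with its neighbors only through $\min$ and $\max$, hence depends on every pair solely as an unordered set; swapping leaves these sets unchanged, so the output is again a diamond, and since no head sits inside a pair, the heads and the value $n$ are untouched. (b) The swaps match the overlines bijectively: the difference formula $\phi(a_{2m,j})-\phi(a_{2m+1,j})=a_{2m,j}-a_{2m,j+1}$ (with $a_{2m,t+1}=0$) shows that the $m$-th pair of the $(j{-}1)$-st block is strict precisely when position $j$ is a last occurrence of a part value of $\bar\alpha^{2m}$; thus the overlineable positions of $\bar\alpha^{2m}$ correspond exactly to the strict $m$-th pairs, and overlining a subset is the same as swapping that subset of pairs, so each even-indexed slot contributes exactly the factor of $2$ per distinct part value that the diamond's pair freedom provides, explaining why the even slots must be overpartitions and the odd slots ordinary. (c) The statistics transfer: the diamond length equals the number of columns $t=\max_i\ell(\alpha^i)$, and because $a_{i,t}>0$ for $i\le r$ and $a_{i,t}=0$ for $i>r$, the sorted $\lambda$ has exactly $(t-1)(2k+1)+r$ nonzero entries; swapping only exchanges entries within a pair and never changes how many of them vanish, so the nonzero count remains $(t-1)(2k+1)+r$.

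Finally, the construction is invertible: given a diamond of length $t$ with $(t-1)(2k+1)+r$ nonzero entries, I would first sort each pair so that its maximum occupies the even position (recording which pairs were out of order), obtain a weakly decreasing sequence, recover $A=\Phi^{-1}(\lambda)$, read the odd rows as the $\alpha^{2m-1}$ and the even rows as the $\bar\alpha^{2m}$, and reinstate overlines according to the recorded swaps. The hard part will be step~(b), in particular the careful treatment of zeros: one must check that the last-occurrence positions of distinct values match the strict pairs even in the final column, and that a pair of the form $(\text{positive},0)$ there is genuinely strict, so that swapping it merely relocates the zero within the pair without altering either the nonzero count or the value of $r$.
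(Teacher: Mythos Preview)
Your proposal is correct and follows essentially the same route as the paper: form the $(2k+1)\times t$ matrix from the underlying partitions, apply $\Phi$ to get a weakly decreasing $\lambda$, and then swap the $m$-th pair in block $j$ exactly when $\alpha^{2m}_j$ is overlined, using the difference formula $\phi(a_{2m,j})-\phi(a_{2m+1,j})=a_{2m,j}-a_{2m,j+1}$ to identify strict pairs with overlineable positions. One small slip: in step~(c) you assert $a_{i,t}>0$ for all $i\le r$, but the hypothesis only gives $a_{r,t}>0$ and $a_{i,t}=0$ for $i>r$; the nonzero count is still $(t-1)(2k+1)+r$ because $\lambda_{(t-1)(2k+1)+i}=\sum_{s\ge i}a_{s,t}\ge a_{r,t}>0$ whenever $i\le r$.
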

\begin{proof}
Let $(\alpha^{1},\alpha^2,\ldots,\alpha^{2k+1})$ be a $(2k+1)$-tuple partition counted by $h(n,k,t,r)$. We can regard each $\alpha^i$ as a partition of length $t$ by adding some zero parts. Define $A=(a_{i,j})$ to be the $(2k+1)\times t$ matrix whose $i$-th row consists of the partition $\alpha^i$.

The algorithm $\Phi$ maps $A$ to a nonnegative integer sequence
\[\lambda=(\lambda_1,\lambda_2,\ldots,\lambda_{(2k+1)t})\]
of length $(2k+1)t$. Since $\alpha^r$ is the last partition with exactly $t$ nonzero parts, we can derive that
$\lambda_{(t-1)(2k+1)+r}>0$ and
\[\lambda_{(t-1)(2k+1)+(r+1)}=\lambda_{(t-1)(2k+1)+(r+2)}=\cdots=\lambda_{t(2k+1)}=0.\]
Meanwhile, since the matrix $A$ is weakly decreasing in rows, we have
\[\lambda_1\geq\lambda_2\geq\cdots\geq\lambda_{(t-1)(2k+1)+r}>0.\]

We now define a $k$-elongated partition diamond $\pi=(\pi_1,\pi_2,\ldots,\pi_{(2k+1)t+1})$ of length $t$ where
\begin{description}
\item[(1)]$\pi_{(j-1)(2k+1)+1}=\lambda_{(j-1)(2k+1)+1}$ for $1\leq j\leq t$, and $\pi_{t(2k+1)+1}=0$;
\item[(2)]for $1\leq i\leq k$ and $1\leq j\leq t$,
\[(\pi_{(j-1)(2k+1)+2i},\pi_{(j-1)(2k+1)+2i+1})=(\lambda_{(j-1)(2k+1)+2i+1},\lambda_{(j-1)(2k+1)+2i})\]
if the $j$-th part of $\alpha^{2i}$ is overlined; otherwise,
\[(\pi_{(j-1)(2k+1)+2i},\pi_{(j-1)(2k+1)+2i+1})=(\lambda_{(j-1)(2k+1)+2i},\lambda_{(j-1)(2k+1)+2i+1}).\]
\end{description}

Because $\lambda$ is a weakly decreasing sequence, we can see that $\pi$ satisfies
\[\pi_1\succeq(\pi_2,\pi_3)\succeq\cdots\succeq(\pi_{2k},\pi_{2k+1})
\succeq\pi_{2k+2}\succeq(\pi_{2k+3},\pi_{2k+4})\succeq\cdots\succeq\pi_{t(2k+1)+1}.\]
Obviously,
\[\pi_{1}+\pi_{2k+2}+\pi_{4k+3}+\cdots+\pi_{(t-1)(2k+1)+1}
=\lambda_{1}+\lambda_{2k+2}+\lambda_{4k+3}+\cdots+\lambda_{(t-1)(2k+1)+1}=n.\]
Therefore, $\pi$ is a $k$-elongated partition diamond of $n$ with length $t$ and $(t-1)(2k+1)+r$ nonzero elements.

In an overpartition, the final occurrence of a part may be overlined. Thus, if the $j$-th part of $\alpha^{2i}$ is overlined, then it is strictly greater than the $(j+1)$-st part of $\alpha^{2i}$. Consequently, $\lambda_{(j-1)(2k+1)+2i}>\lambda_{(j-1)(2k+1)+2i+1}$, and hence $\pi_{(j-1)(2k+1)+2i}<\pi_{(j-1)(2k+1)+2i+1}$. This property is very important to the inverse.

Conversely, given a $k$-elongated partition diamond $\pi=(\pi_1,\pi_2,\ldots,\pi_{t(2k+1)+1})$ counted by $d(n,k,t,r)$, we first delete $\pi_{t(2k+1)+1}$ and rearrange the parts of $\pi$ into a weakly decreasing sequence $\lambda=(\lambda_1,\lambda_2,\ldots,\lambda_{t(2k+1)})$ with
\[\lambda_1\geq\lambda_2\geq\cdots\geq\lambda_{t(2k+1)}.\]
Then applying the inverse of $\Phi$ to $\lambda$ produces a $(2k+1)\times t$ matrix $A=(a_{i,j})$. Finally, for each $1\leq i\leq k$ and $1\leq j\leq t$, we add an overline to $a_{2i,j}$ if $\pi_{(j-1)(2k+1)+2i}$ is strictly less than $\pi_{(j-1)(2k+1)+2i+1}$. Thinking of each row of $A$ as a partition if we ignore the zero entries, we obtain a $(2k+1)$-tuple partition counted by $h(n,k,t,r)$.
\end{proof}

\begin{example}
Let $\alpha=(\alpha^1,\alpha^2,\alpha^3,\alpha^4,\alpha^5)$ be a $5$-tuple of partitions counted by $h(73,2,4,3)$ where
\[\alpha^1=(5,2),\alpha^2=(7,4,\overline{4},2),\alpha^3=(11,3,2,1),\alpha^4=(\overline{6},5,\overline{3}),
\alpha^5=(8,8,2).\]
Regarding $\alpha^i$ as the $i$-th row of the matrix $A$, we obtain
\begin{align*}
A=
\begin{pmatrix}
5 & 2 & 0 & 0 \\
7 & 4 & \overline{4} & 2 \\
11 & 3 & 2 & 1\\
\overline{6} & 5 & \overline{3} & 0 \\
8 & 8 & 2 & 0
\end{pmatrix}.
\end{align*}
Performing the algorithm $\Phi$ produces a nonnegative integer sequence of length $20$
\[\lambda=(37,34,31,23,22,22,20,20,19,17,11,11,9,8,5,3,3,1,0,0).\]
Finally, we obtain a $2$-elongated partition diamond $\pi=(\pi_1,\pi_2,\ldots,\pi_{21})$ where
\begin{align*}
\begin{array}{lll}
\pi_1=\lambda_1=37,&(\pi_{2},\pi_{3})=(\lambda_{2},\lambda_{3})=(34,31),
&(\pi_{4},\pi_{5})=(\lambda_{5},\lambda_{4})=(22,23),\\
\pi_{6}=\lambda_{6}=22,&(\pi_{7},\pi_{8})=(\lambda_{7},\lambda_{8})=(20,20),
&(\pi_{9},\pi_{10})=(\lambda_{9},\lambda_{10})=(19,17),\\
\pi_{11}=\lambda_{11}=11,&(\pi_{12},\pi_{13})=(\lambda_{13},\lambda_{12})=(9,11),
&(\pi_{14},\pi_{15})=(\lambda_{15},\lambda_{14})=(5,8),\\
\pi_{16}=\lambda_{16}=3,&(\pi_{17},\pi_{18})=(\lambda_{17},\lambda_{18})=(3,1),
&(\pi_{19},\pi_{20})=(\lambda_{19},\lambda_{20})=(0,0),\\
\pi_{21}=0. &{} &{}
\end{array}
\end{align*}
There are $4$ building blocks and $18$ nonzero elements in $\pi$, which is depicted in Figure \ref{figexa}.
\begin{figure}[ht]
\begin{center}
\begin{tikzpicture}[scale=1.2]
\path (0,1) coordinate (1); \path (0.6,2) coordinate (2); \path (0.6,0) coordinate (3);

\path (1.8,2) coordinate (4); \path (1.8,0) coordinate (5); \path (2.4,1) coordinate (6);

\path (3,2) coordinate (7); \path (3,0) coordinate (8); \path (4.2,2) coordinate (9);

\path (4.2,0) coordinate (10); \path (4.8,1) coordinate (11);

\path (5.4,2) coordinate (12); \path (5.4,0) coordinate (13);

\path (6.6,2) coordinate (14); \path (6.6,0) coordinate (15);

\path (7.2,1) coordinate (16); \path (7.8,2) coordinate (17); \path (7.8,0) coordinate (18);

\path (9,2) coordinate (19); \path (9,0) coordinate (20); \path (9.6,1) coordinate (21);

\path (1.2,1) coordinate (n1); \path (3.6,1) coordinate (n2);

\path (6,1) coordinate (n3); \path (8.4,1) coordinate (n4);

\draw[line width=0.6pt, middlearrow={stealth reversed}]  (2)--(1);

\draw[line width=0.6pt, middlearrow={stealth reversed}]  (3)--(1);

\draw[line width=0.6pt, middlearrow={stealth reversed}]  (5)--(3);

\draw[line width=0.6pt, middlearrow={stealth reversed}]  (4)--(2);

\draw[line width=0.6pt, middlearrow={stealth reversed}]  (6)--(5);

\draw[line width=0.6pt, middlearrow={stealth reversed}]  (6)--(4);

\draw[line width=0.6pt, middlearrow={stealth reversed}]  (n1)--(2);

\draw[line width=0.6pt, middlearrow={stealth reversed}]  (n1)--(3);

\draw[line width=0.6pt]  (n1)--(4);

\draw[line width=0.6pt]  (n1)--(5);

\draw[line width=0.6pt, middlearrow={stealth reversed}]  (7)--(6);

\draw[line width=0.6pt, middlearrow={stealth reversed}]  (8)--(6);

\draw[line width=0.6pt, middlearrow={stealth reversed}]  (11)--(9);

\draw[line width=0.6pt, middlearrow={stealth reversed}]  (11)--(10);

\draw[line width=0.6pt, middlearrow={stealth reversed}]  (9)--(7);

\draw[line width=0.6pt, middlearrow={stealth reversed}]  (10)--(8);

\draw[line width=0.6pt, middlearrow={stealth reversed}]  (n2)--(7);

\draw[line width=0.6pt, middlearrow={stealth reversed}]  (n2)--(8);

\draw[line width=0.6pt]  (n2)--(9);

\draw[line width=0.6pt]  (n2)--(10);
\draw[line width=0.6pt, middlearrow={stealth reversed}]  (12)--(11);

\draw[line width=0.6pt, middlearrow={stealth reversed}]  (13)--(11);

\draw[line width=0.6pt, middlearrow={stealth reversed}]  (14)--(12);

\draw[line width=0.6pt, middlearrow={stealth reversed}]  (15)--(13);

\draw[line width=0.6pt, middlearrow={stealth reversed}]  (n3)--(12);

\draw[line width=0.6pt, middlearrow={stealth reversed}]  (n3)--(13);

\draw[line width=0.6pt]  (n3)--(14);

\draw[line width=0.6pt]  (n3)--(15);

\draw[line width=0.6pt, middlearrow={stealth reversed}]  (16)--(14);

\draw[line width=0.6pt, middlearrow={stealth reversed}]  (16)--(15);

\draw[line width=0.6pt, middlearrow={stealth reversed}]  (17)--(16);

\draw[line width=0.6pt, middlearrow={stealth reversed}]  (17)--(16);

\draw[line width=0.6pt, middlearrow={stealth reversed}]  (18)--(16);

\draw[line width=0.6pt, middlearrow={stealth reversed}]  (19)--(17);

\draw[line width=0.6pt, middlearrow={stealth reversed}]  (20)--(18);

\draw[line width=0.6pt, middlearrow={stealth reversed}]  (n4)--(17);

\draw[line width=0.6pt, middlearrow={stealth reversed}]  (n4)--(18);

\draw[line width=0.6pt]  (n4)--(19);

\draw[line width=0.6pt]  (n4)--(20);

\draw[line width=0.6pt, middlearrow={stealth reversed}]  (21)--(19);

\draw[line width=0.6pt, middlearrow={stealth reversed}]  (21)--(20);

\draw[fill=black] (1) circle (2pt); \draw[fill=black] (2) circle (2pt); \draw[fill=black] (3) circle (2pt);

\draw[fill=black] (4) circle (2pt); \draw[fill=black] (5) circle (2pt); \draw[fill=black] (6) circle (2pt);

\draw[fill=black] (7) circle (2pt); \draw[fill=black] (8) circle (2pt); \draw[fill=black] (9) circle (2pt);

\draw[fill=black] (10) circle (2pt); \draw[fill=black] (11) circle (2pt);

\draw[fill=black] (12) circle (2pt); \draw[fill=black] (13) circle (2pt); \draw[fill=black] (14) circle (2pt);

\draw[fill=black] (15) circle (2pt); \draw[fill=black] (16) circle (2pt); \draw[fill=black] (17) circle (2pt);

\draw[fill=black] (18) circle (2pt); \draw[fill=black] (19) circle (2pt); \draw[fill=black] (20) circle (2pt);

\draw[fill=black] (21) circle (2pt);

\node[left] at (1) {$37$}; \node[above] at (2) {$31$}; \node[below] at (3) {$34$};

\node[above] at (4) {$23$}; \node[below] at (5) {$22$}; \node[left] at (6) {$22$};

\node[above] at (7) {$20$}; \node[below] at (8) {$20$}; \node[above] at (9) {$17$};

\node[below] at (10) {$19$}; \node[left] at (11) {$11$};

\node[above] at (12) {$11$}; \node[below] at (13) {$9$};

\node[above] at (14) {$8$}; \node[below] at (15) {$5$};

\node[left] at (16) {$3$};

\node[above] at (17) {$1$}; \node[below] at (18) {$3$};

\node[above] at (19) {$0$}; \node[below] at (20) {$0$};

\node[right] at (21) {$0$};

\end{tikzpicture}
\end{center}\caption{A $2$-elongated partition diamond of $73$.}\label{figexa}
\end{figure}
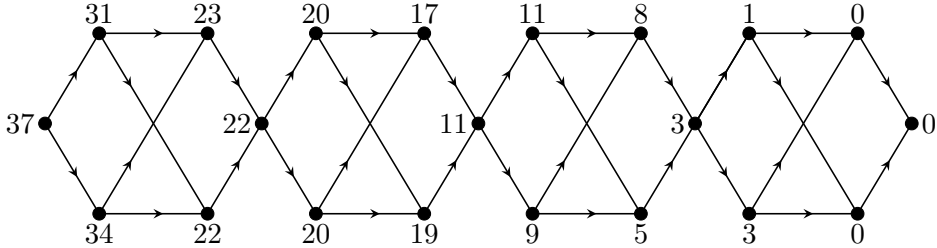
\end{example}

\section{Schmidt type overpartition theorems}

In the treatment of $k$-elongated partition diamonds, we use the tuple partitions in which ordinary partitions and overpartitions appear alternatively. It is natural to ask what results will be obtained if only overpartitions are allowed. In this section, we establish two results on overpartitions which can be viewed as the analogue of the main results in Section 3.

\subsection{Schmidt $k$-overpartition theorem}

An overpartition $(\lambda_1,\lambda_2,\ldots,\lambda_l)$ is called a \emph{strict overpartition} if $\lambda_1>\lambda_2>\cdots>\lambda_l$, and $\lambda_i$ may be overlined only if $\lambda_{i}-\lambda_{i+1}\geq2$ or $\lambda_i=\lambda_l$. A \emph{Schmidt $k$-overpartition} is a strict overpartition $(\lambda_1,\lambda_2,\lambda_3,\ldots)$ with $\lambda_1+\lambda_{k+1}+\lambda_{2k+1}+\cdots=n$. 

\begin{theorem}\label{main4}
For given positive integers $n,k,t,r,s$ with $1\leq r\leq k$, let $\overline{p}(n,k,t,r,s)$ be the number of $k$-tuple strict overpartitions $(\alpha^1,\alpha^2,\ldots,\alpha^k)$ of $n$ with totally $s$ overlined parts where $\alpha^1,\alpha^2,\ldots,\alpha^r$ have length $t$ and $\alpha^{r+1},\alpha^{r+2},\ldots,\alpha^{k}$ have length $t-1$, and each overlined part of $\alpha^i$ must be at least $2$ for all $i\neq r$. Let $\overline{q}(n,k,t,r,s)$ be the number of Schmidt $k$-overpartitions of $n$ with $(t-1)k+r$ parts, and $s$ parts of which are overlined. Then \[\overline{p}(n,k,t,r,s)=\overline{q}(n,k,t,r,s).\]
\end{theorem}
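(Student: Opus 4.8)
The plan is to mimic the proof of Theorem~\ref{main1}, enhancing the bijection $\Phi$ so that it also transports the overlines. Starting from a $k$-tuple strict overpartition $(\alpha^1,\dots,\alpha^k)$ counted by $\overline{p}(n,k,t,r,s)$, I first forget the overlines and view each $\alpha^i$ as a strict partition of length $t$, padding $\alpha^{r+1},\dots,\alpha^{k}$ with a single zero so that every row has length $t$. Exactly as in Theorem~\ref{main1}, the resulting matrix $A=(a_{i,j})$ with $a_{i,j}=\alpha^i_j$ has its only zero entries in the last $k-r$ positions of the last column, so Theorem~\ref{thmkey}(b) guarantees that $\Phi(A)=(\lambda_1,\dots,\lambda_{kt})$ is strictly decreasing down to $\lambda_{(t-1)k+r}>0$ and vanishes thereafter, and that $\lambda_1+\lambda_{k+1}+\cdots+\lambda_{(t-1)k+1}=n$. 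Thus $(\lambda_1,\dots,\lambda_{(t-1)k+r})$ already has the underlying strict shape and Schmidt weight of a Schmidt $k$-overpartition with $(t-1)k+r$ parts.

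The heart of the argument is the rule for moving overlines: I would overline $\lambda_m$, where $m=(j-1)k+i$, precisely when $\alpha^i_j$ is overlined. Since $(i,j)\mapsto(j-1)k+i$ is a bijection between matrix cells and indices $1,\dots,kt$ that sends positive entries to positive $\lambda_m$, this preserves the total number $s$ of overlines and places them on genuine parts. To see that the result is a legal strict overpartition, I would invoke the difference formulas recorded before Theorem~\ref{thmkey}, which combine into the single clean identity
\[
\lambda_{(j-1)k+i}-\lambda_{(j-1)k+i+1}=\alpha^i_j-\alpha^i_{j+1}
\]
(with the conventions $\alpha^i_{t+1}=0$ and $\lambda_{tk+1}=0$). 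Hence the gap below $\lambda_m$ in the sequence equals the gap below $\alpha^i_j$ in its row, so an overline is admissible on one side exactly when the corresponding gap condition holds on the other.

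The main obstacle, and the step that explains the somewhat asymmetric hypotheses, is checking that the overline conditions match \emph{exactly}. I would split into two cases according to whether $\alpha^i_j$ is a non-final or final part of $\alpha^i$. When $\alpha^i_j$ is not the last part, overlinability in $\alpha^i$ means $\alpha^i_j-\alpha^i_{j+1}\ge 2$, which by the displayed identity is the same as $\lambda_m-\lambda_{m+1}\ge 2$; here $\lambda_m$ is interior, so this is precisely its overlinability condition, and the auxiliary requirement $\alpha^i_j\ge 2$ is automatic. When $\alpha^i_j$ is the last part of $\alpha^i$, the final-part position $(i,\ell(\alpha^i))$ maps to the last index $(t-1)k+r$ if and only if $i=r$; for $i=r$ both the tuple side (no size restriction on $\alpha^r$) and the Schmidt side (the last part may always be overlined) permit the overline freely even when the part equals $1$, whereas for $i\ne r$ the index $m$ is interior, so $\lambda_m$ is overlinable iff $\lambda_m-\lambda_{m+1}=\alpha^i_j\ge 2$ --- which is exactly the imposed condition that overlined parts of $\alpha^i$ be at least $2$ for $i\ne r$. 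This case analysis shows the forward map lands in the set counted by $\overline{q}(n,k,t,r,s)$, and running the same equivalence backwards shows that appending $k-r$ zeros, applying $\Phi^{-1}$, and reading the overlines off through the same cell-to-index correspondence recovers a tuple counted by $\overline{p}(n,k,t,r,s)$. Since $\Phi$ and the overline dictionary are both invertible, the two maps are mutually inverse, yielding $\overline{p}(n,k,t,r,s)=\overline{q}(n,k,t,r,s)$.
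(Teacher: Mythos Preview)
Your proposal is correct and follows essentially the same approach as the paper: forget the overlines, apply the bijection $\Phi$ from Theorem~\ref{main1} to obtain the underlying Schmidt $k$-partition, then transport the overlines via the cell-to-index correspondence $(i,j)\mapsto(j-1)k+i$, using the difference identity $\lambda_{(j-1)k+i}-\lambda_{(j-1)k+i+1}=\alpha^i_j-\alpha^i_{j+1}$ to verify that the overlinability conditions match. Your case analysis (non-final versus final part, and $i=r$ versus $i\ne r$) is exactly the one the paper carries out, and your explicit treatment of the inverse direction is slightly more detailed than the paper's but otherwise identical in substance.
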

\begin{proof}
Let $\alpha=(\alpha^1,\alpha^2,\ldots,\alpha^k)$ be a $k$-tuple strict  overpartition counted by $\overline{p}(n,k,t,r,s)$. Ignoring the overlines, we can treat $\alpha$ as a $k$-tuple partition counted by $p(n,k,t,r)$. It follows from the proof of Theorem \ref{main1} that $\alpha$ corresponds to a Schmidt $k$-partition \[\lambda=(\lambda_1,\lambda_2,\ldots,\lambda_{(t-1)k+r})\] 
counted by $q(n,k,t,r)$.

For $1\leq i\leq k$ and $1\leq j\leq t$, we overline the part $\lambda_{(j-1)k+i}$ if and only if the $j$-th part of $\alpha^i$ is overlined. We now can view $\lambda$ as an overpartition. Because $\lambda_{(j-1)k+i}-\lambda_{(j-1)k+i+1}$ equals the difference between the $j$-th part and $(j+1)$-st part of $\alpha^i$, we can conclude that $\lambda_{(j-1)k+i}-\lambda_{(j-1)k+i+1}\geq2$ if $\lambda_{(j-1)k+i}$ is overlined. Meanwhile, $\lambda_{(t-1)k+i}-\lambda_{(t-1)k+i+1}$ equals the last part of $\alpha^i$. Thus, if $\lambda_{(t-1)k+i}$ is overlined, then either the last part of $\alpha^i$ is greater than one or $i=r$, which implies that either $\lambda_{(t-1)k+i}-\lambda_{(t-1)k+i+1}\geq 2$ or $\lambda_{(t-1)k+i}$ is the last nonzero part of $\lambda$. Therefore, $\lambda$ is a Schmidt $k$-overpartition.

It is clear that there are totally $s$ overlined parts in $\lambda$ since the total number of overlined parts of $\alpha$ is $s$.
\end{proof}

\begin{example}
Let $\alpha=(\alpha^1,\alpha^2,\alpha^3)$ be a triple strict overpartition counted by $\overline{p}(79,3,5,2,6)$ where
\[\alpha^1=(9,6,\overline{4},2,1),\alpha^2=(\overline{13},8,\overline{7},3,\overline{1}),\alpha^3=(11,\overline{7},5,\overline{2}).\]
Regarding $\alpha^i$ as the $i$-th row of the matrix $A$, we have
\begin{align*}
A=
\begin{pmatrix}
9 & 6 & \overline{4} & 2 & 1 \\
\overline{13} & 8 & \overline{7} & 3 & \overline{1}\\
11 & \overline{7} & 5 & \overline{2} & 0\\
\end{pmatrix}.
\end{align*}
The algorithm $\Phi$ produces a strictly decreasing nonnegative integer sequence of length $15$
\[\lambda=(33,30,25,21,19,18,16,14,10,7,6,4,2,1,0).\]
Adding overlines and deleting the zeroes, we obtain a Schmidt $3$-overpartition
\[(33,\overline{30},25,21,19,\overline{18},\overline{16},\overline{14},10,7,6,\overline{4},2,\overline{1}),\]
which is counted by $\overline{q}(79,3,5,2,6)$.
\end{example}

Since the final occurrence of a part may be overlined, an overpartition corresponds to an ordinary Ferrers graph in which a corner with no cell below it may be marked. The Durfee square of an overpartition is referred to the Durfee square of its marked Ferrers graph. For example, the overpartition $(7,\overline{7},5,4,\overline{4},2,\overline{1})$ is represented in Figure \ref{figover}.
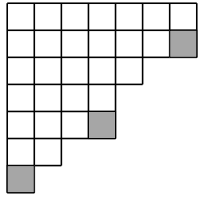
\begin{figure}[ht]
\begin{center}
\begin{tikzpicture}[scale=1.2]

\draw[line width=0.6pt]  (0,2.1)--(2.1,2.1);
\draw[line width=0.6pt]  (0,1.8)--(2.1,1.8);
\draw[line width=0.6pt]  (0,1.5)--(2.1,1.5);
\draw[line width=0.6pt]  (0,1.2)--(1.5,1.2);
\draw[line width=0.6pt]  (0,0.9)--(1.2,0.9);
\draw[line width=0.6pt]  (0,0.6)--(1.2,0.6);
\draw[line width=0.6pt]  (0,0.3)--(0.6,0.3);
\draw[line width=0.6pt]  (0,0)--(0.3,0);
\draw[line width=0.6pt]  (0,2.1)--(0,0);
\draw[line width=0.6pt]  (0.3,2.1)--(0.3,0);
\draw[line width=0.6pt]  (0.6,2.1)--(0.6,0.3);
\draw[line width=0.6pt]  (0.9,2.1)--(0.9,0.6);
\draw[line width=0.6pt]  (1.2,2.1)--(1.2,0.6);
\draw[line width=0.6pt]  (1.5,2.1)--(1.5,1.2);
\draw[line width=0.6pt]  (1.8,2.1)--(1.8,1.5);
\draw[line width=0.6pt]  (2.1,2.1)--(2.1,1.5);

\draw[fill=gray!70] (0,0) rectangle (0.3,0.3);
\draw[fill=gray!70] (0.9,0.6) rectangle (1.2,0.9);
\draw[fill=gray!70] (1.8,1.5) rectangle (2.1,1.8);

\end{tikzpicture}
\end{center}\caption{The Ferrers graph of an overpartition.}\label{figover}
\end{figure}

\begin{corollary}\label{c1main4}
The number of overpartitions of $n$ with $s$ overlined parts and Durfee square size $t$ is equal to the number of Schmidt $2$-overpartitions of $n$ with $s$ parts being overlined and length $2t$ or $2t-1$.
\end{corollary}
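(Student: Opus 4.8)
The plan is to obtain the bijection by composing the Durfee-square decomposition already used in Corollary \ref{c1main1} with the Schmidt $k$-overpartition bijection of Theorem \ref{main4} specialized to $k=2$, while keeping careful track of the overlines. Throughout I set $r=2$ when $\lambda_t>t$ (so the decomposition produces a $\beta$ of length $t$) and $r=1$ when $\lambda_t=t$ (so $\beta$ has length $t-1$); this matches the two possible lengths $2t$ and $2t-1=(t-1)\cdot 2+r$ of a Schmidt $2$-overpartition.

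First I would describe the forward map. Start from an overpartition $\bar\lambda$ of $n$ with Durfee square size $t$ and $s$ marked corners in its marked Ferrers graph. Forgetting the marks, apply the decomposition of Corollary \ref{c1main1} to split $\bar\lambda$ into a strict partition $\alpha$ of length $t$ (read off on and below the main diagonal, by columns) and a strict partition $\beta$ of length $t$ or $t-1$ (read off to the right of the diagonal, by rows). Now transfer the marks: each marked corner lies in a unique row and a unique column, so I send a marked corner weakly below the diagonal to an overline on the part of $\alpha$ coming from its column, and a marked corner strictly above the diagonal to an overline on the part of $\beta$ coming from its row. Since distinct corners occupy distinct rows and distinct columns, this is a bijection between the $s$ marks of $\bar\lambda$ and a set of $s$ overlined parts of the bipartition $(\alpha,\beta)$.

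The heart of the argument is checking that these overlines are exactly the legal ones, i.e.\ that $(\alpha,\beta)$ is counted by $\overline{p}(n,2,t,r,s)$. Using the identities $\alpha_j-\alpha_{j+1}=\lambda'_j-\lambda'_{j+1}+1$ and $\beta_i-\beta_{i+1}=\lambda_i-\lambda_{i+1}+1$ coming from the decomposition, a marked corner that is not a last part forces the relevant gap to be at least $2$, so the strict-overpartition overlining rule ($\lambda_i-\lambda_{i+1}\ge 2$ or $\lambda_i=\lambda_l$) holds. The delicate point is the extra requirement that overlined parts of $\alpha^i$ be at least $2$ for $i\neq r$. I would verify this by analyzing the last parts: the only overlinable part that can equal $1$ is a last part, and a marked corner producing $\alpha_t=1$ must sit on the diagonal, which forces $\lambda_t=t$, i.e.\ the case $r=1$ in which $\alpha=\alpha^r$ is exempt; in the case $r=2$ a marked bottom cell of column $t$ forces $\lambda'_t>t$ and hence $\alpha_t\ge 2$, while a marked last part $\beta_t=1$ forces $\lambda_t=t+1$, so that $\beta=\alpha^r$ is again exempt. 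Conversely, an overline allowed in $\overline{p}(n,2,t,r,s)$ always corresponds, via the same gap identities, to a genuine removable corner of $\bar\lambda$, so no illegal mark is ever required.

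Finally I would apply Theorem \ref{main4} with $k=2$ to send $(\alpha,\beta)\in\overline{p}(n,2,t,r,s)$ to a Schmidt $2$-overpartition of $n$ with $(t-1)\cdot 2+r$ parts and $s$ overlined parts, which is $2t$ parts when $r=2$ and $2t-1$ parts when $r=1$. Each of the three steps---the Durfee decomposition of Corollary \ref{c1main1}, the mark transfer, and Theorem \ref{main4}---is a bijection, so their composition is the desired bijection and the corollary follows. I expect the main obstacle to be the overline-legality verification of the previous paragraph, where the interplay between the diagonal, the value $1$, and the index $r$ must be handled case by case.
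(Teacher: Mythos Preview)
Your proposal is correct and follows essentially the same route as the paper: decompose the overpartition along its Durfee square exactly as in Corollary~\ref{c1main1}, transfer each marked corner to an overline on the corresponding part of $\alpha$ (columns, diagonal included) or $\beta$ (rows, strictly right of the diagonal), verify that the resulting pair lies in $\overline{p}(n,2,t,r,s)$, and then invoke Theorem~\ref{main4} with $k=2$. Your legality check is in fact more explicit than the paper's (which only records the $\alpha$-side observation that an overlined $\alpha_t=1$ forces $\ell(\beta)=t-1$); the one small case you do not spell out---that when $r=1$ an overlined $\beta_{t-1}=1$ is impossible because $\lambda_{t-1}=\lambda_t=t$ prevents a corner at $(t-1,t)$---is handled by exactly the gap identities you already set up.
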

\begin{proof}
Assuming that $\lambda$ is an overpartiton with Durfee square size $t$, we decompose $\lambda$ into two strict overpartitions $\alpha$ and $\beta$ just like what we do in the proof of Corollary \ref{c1main1}. A part of $\alpha$ or $\beta$ is overlined if the last cell of the corresponding column or row is marked. Then $\alpha$ is a strict overpartition of length $t$, and $\beta$ is a strict overpartition whose length is $t$ or $t-1$ depending on the $t$-th part of $\lambda$ is greater than $t$ or equal to $t$.

If the last part of $\alpha$ is equal to $1$, then it can be overlined only if the length of $\beta$ is $t-1$. Thus, $(\alpha,\beta)$ is a couple strict overpartiton counted by $\overline{p}(n,2,t,r,s)$. 

We now can conclude the desired result from Theorem \ref{main4}.
\end{proof}

Summing up $t$ over all nonnegative integers in Corollary \ref{c1main4}, we have the following result.
\begin{corollary}\label{c2main4}
The number of overpartitions of $n$ with $s$ overlined parts equals the number of Schmidt $2$-overpartitions of $n$ with $s$ parts being overlined.
\end{corollary}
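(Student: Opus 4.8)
The plan is to deduce Corollary~\ref{c2main4} directly from the refined statement of Corollary~\ref{c1main4} by summing over the Durfee square size $t$. Since Corollary~\ref{c1main4} is exactly Corollary~\ref{c2main4} with the extra parameter $t$ tracked, the entire task reduces to checking that $t$ stratifies each of the two families in Corollary~\ref{c2main4} into disjoint classes whose union is the whole family. No generating functions are needed, in keeping with the combinatorial spirit of the paper.

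First I would record the stratification on the overpartition side, where it is automatic: every overpartition of $n$ has a unique Durfee square size, read off from its marked Ferrers graph as in Figure~\ref{figover}. Consequently, fixing the number $s$ of overlined parts and letting $t$ range over all positive integers, the left-hand quantities of Corollary~\ref{c1main4} count, class by class, precisely the overpartitions of $n$ with $s$ overlined parts, each exactly once.

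Next I would verify the analogous statement on the Schmidt side. Each Schmidt $2$-overpartition of $n$ has a well-defined length $\ell\ge 1$, and for every such $\ell$ there is a unique $t\ge 1$ with $\ell\in\{2t-1,\,2t\}$, namely $t=\lceil \ell/2\rceil$. Hence the admissible lengths split into the disjoint pairs $\{1,2\},\{3,4\},\{5,6\},\dots$, so summing the right-hand quantities of Corollary~\ref{c1main4} over $t\ge 1$ enumerates every Schmidt $2$-overpartition of $n$ with $s$ overlined parts exactly once. Adding the equalities of Corollary~\ref{c1main4} over all $t\ge 1$ then yields the identity claimed in Corollary~\ref{c2main4}. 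The argument presents no genuine obstacle; the only point deserving attention is the disjoint-cover property just described, and both halves of it are immediate.
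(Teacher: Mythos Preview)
Your proof is correct and follows precisely the approach indicated in the paper, which derives Corollary~\ref{c2main4} from Corollary~\ref{c1main4} simply by summing over $t$. Your added verification that the values of $t$ partition each side into disjoint classes is a welcome clarification of a step the paper leaves implicit.
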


\subsection{Unrestricted Schmidt $k$-overpartition theorem}

An overpartition $(\lambda_1,\lambda_2,\lambda_3,\ldots)$ is called an \emph{unrestricted Schmidt $k$-overpartition} if \[\lambda_1+\lambda_{k+1}+\lambda_{2k+1}+\cdots=n.\] 

\begin{theorem}\label{main5}
For given positive integers $n,k,t,r,s$ where $1\leq r\leq k$, let $\overline{f}(n,k,t,r,s)$ be the number of $k$-tuple overpartitions $(\alpha^1,\alpha^2,\ldots,\alpha^k)$ of $n$ with $s$ parts being overlined in which  $\max\{\ell(\alpha^1),\ell(\alpha^2),\ldots,\ell(\alpha^k)\}=t$ and $r$ is the largest integer such that $\ell(\alpha^r)=t$. Let $\overline{g}(n,k,t,r,s)$ be the number of unrestricted Schmidt $k$-overpartitions of $n$ with $s$ parts being overlined and length being equal to $(t-1)k+r$. Then \[\overline{f}(n,k,t,r,s)=\overline{g}(n,k,t,r,s).\]
\end{theorem}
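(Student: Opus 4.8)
The plan is to follow the template of the proof of Theorem~\ref{main4}, but to replace its appeal to the strict result (Theorem~\ref{main1}) by the unrestricted result (Theorem~\ref{main2}), and to replace the strict-overpartition overlining rule by the ordinary rule that only the final occurrence of a value may be overlined.

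First I would take a $k$-tuple overpartition $(\alpha^1,\ldots,\alpha^k)$ counted by $\overline{f}(n,k,t,r,s)$, forget every overline so that it becomes a $k$-tuple partition counted by $f(n,k,t,r)$, and run exactly the construction in the proof of Theorem~\ref{main2}: form the matrix $A=(a_{i,j})$ with $a_{i,j}=\alpha^i_j$ (each row padded with zeros to length $t$) and apply $\Phi$ to obtain an unrestricted Schmidt $k$-partition $\lambda=(\lambda_1,\ldots,\lambda_{(t-1)k+r})$ counted by $g(n,k,t,r)$. I would then reinstate the overlines by the rule: overline $\lambda_{(j-1)k+i}$ exactly when the $j$-th part of $\alpha^i$ is overlined. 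Since $(i,j)\mapsto (j-1)k+i$ is a bijection onto the positions of $\lambda$, this preserves the total number $s$ of overlined parts as well as the length and the weight.

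The crux is to check that the marked sequence $\lambda$ is a genuine overpartition, i.e., that each overlined $\lambda_m$ is a final occurrence of its value ($\lambda_m>\lambda_{m+1}$, or $m$ is the last nonzero index). For this I would use the difference identities recorded for $\Phi$ just before Theorem~\ref{thmkey}: writing $m=(j-1)k+i$, one has $\lambda_m-\lambda_{m+1}=a_{i,j}-a_{i,j+1}$ for every $j<t$ (the seam case $i=k$, where $m+1$ lands in the next block, collapses to the same formula after a one-line computation), while $\lambda_{(t-1)k+i}-\lambda_{(t-1)k+i+1}=a_{i,t}$ in the last column. Because a part of the overpartition $\alpha^i$ may be overlined only if it is the final occurrence of its value, overlining $\alpha^i_j$ forces $a_{i,j}>a_{i,j+1}$ for $j<t$ and $a_{i,t}>0$ for $j=t$; in either case $\lambda_m>\lambda_{m+1}$, except in the single boundary instance $(i,j)=(r,t)$, where $\lambda_m$ is the last nonzero part and is therefore overlineable. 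Thus the descent pattern of each row of $A$ is transcribed faithfully onto $\lambda$, and the overpartition conditions hold.

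For invertibility I would reverse each arrow: from an unrestricted Schmidt $k$-overpartition counted by $\overline{g}(n,k,t,r,s)$ I forget the overlines, recover the underlying $k$-tuple partition through $\Phi^{-1}$ as in Theorem~\ref{main2}, and then overline $\alpha^i_j$ precisely when $\lambda_{(j-1)k+i}$ is overlined. The same difference identities, read backwards, turn an overlineable $\lambda_m$ (a descent, or the terminal nonzero part) into a descent $\alpha^i_j>\alpha^i_{j+1}$ of $\alpha^i$ or into the last part of its row, so $\alpha^i_j$ is a legitimate overlined part. I expect the only delicate point---the main obstacle---to be precisely this matching of final-occurrence positions across the two sides, and in particular verifying it at the seams between consecutive column blocks (the $i=k$ case) and at the terminal nonzero part $\lambda_{(t-1)k+r}$; once those are settled, all remaining structure is inherited verbatim from Theorem~\ref{main2}.
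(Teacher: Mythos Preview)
Your proposal is correct and follows essentially the same approach as the paper: strip overlines, invoke the bijection from Theorem~\ref{main2}, and then transport each overline along the rule $\lambda_{(j-1)k+i}\leftrightarrow\alpha^i_j$. In fact you supply more detail than the paper does, explicitly checking via the difference identities that final occurrences are sent to final occurrences (including the seam $i=k$ and the terminal part at $(i,j)=(r,t)$), whereas the paper's proof simply asserts that the resulting $\lambda$ is an unrestricted Schmidt $k$-overpartition without spelling this out.
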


\begin{proof}
Let $\alpha=(\alpha^1,\alpha^2,\ldots,\alpha^k)$ be a $k$-tuple overpartition counted by $\overline{f}(n,k,t,r,s)$. We first regard $\alpha$ as an ordinary $k$-tuple partition and then perform the same operation used in the proof of Theorem \ref{main2}. We obtain an unrestricted Schmidt $k$-partition
\[\lambda=(\lambda_1,\lambda_2,\ldots,\lambda_{(t-1)k+r})\]
counted by $g(n,k,t,r)$.

We now covert $\lambda$ into an overpartition by adding overlines as follows. The part $\lambda_{(j-1)k+i}$ is overlined if and only if the $j$-th part of $\alpha^{i}$ is overlined. Obviously, the number of overlined parts remains unchanged. Thus, we get an unrestricted Schmidt $k$-overpartition counted by $\overline{g}(n,k,t,r,s)$.
\end{proof}

\begin{corollary}
The number of $k$-tuple overpartitions of $n$ with $s$ overlined parts is equal to the number of unrestricted Schmidt $k$-overpartitions of $n$ with $s$ parts being overlined.
\end{corollary}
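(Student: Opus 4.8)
The plan is to deduce this corollary directly from Theorem~\ref{main5} by summing the refined identity $\overline{f}(n,k,t,r,s)=\overline{g}(n,k,t,r,s)$ over all admissible pairs $(t,r)$ while holding $s$ fixed. The key bookkeeping fact I would isolate first is that, for $k$ fixed, the map $(t,r)\mapsto(t-1)k+r$ is a bijection from $\{(t,r):t\ge 1,\ 1\le r\le k\}$ onto the positive integers; this is just the division-with-remainder parametrization $\ell=(t-1)k+r$ with $1\le r\le k$, where $\ell$ plays the role of a length.

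Next I would verify that the left-hand family is partitioned correctly by these parameters. For $n\ge 1$, any $k$-tuple overpartition $(\alpha^1,\ldots,\alpha^k)$ of $n$ has at least one nonempty component, so its maximum length $t:=\max_i\ell(\alpha^i)$ is a well-defined positive integer, and the largest index $r$ with $\ell(\alpha^r)=t$ lies in $\{1,\ldots,k\}$. Hence each such tuple with exactly $s$ overlined parts is counted by precisely one $\overline{f}(n,k,t,r,s)$, and summing over all $t\ge 1$ and $1\le r\le k$ recovers the total number of $k$-tuple overpartitions of $n$ with $s$ overlined parts.

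I would then carry out the analogous check on the right-hand side: any unrestricted Schmidt $k$-overpartition of $n$ has a well-defined length $\ell\ge 1$, which by the parametrization above determines a unique pair $(t,r)$ with $\ell=(t-1)k+r$, so it is counted by exactly one $\overline{g}(n,k,t,r,s)$. Summing over all $(t,r)$ recovers the total count of unrestricted Schmidt $k$-overpartitions of $n$ with $s$ overlined parts. Applying Theorem~\ref{main5} termwise and adding up then yields the claimed equality.

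The only point requiring genuine care—and the nearest thing to an obstacle here—is confirming that these two parameter decompositions are honest partitions of their respective families: every object is counted exactly once and none is omitted, i.e.\ $t$ and $r$ are uniquely determined in each case. Once this uniqueness is verified, the corollary is an immediate consequence of summing the refined identity of Theorem~\ref{main5}.
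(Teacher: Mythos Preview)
Your proposal is correct and matches the paper's intended approach: the corollary is stated without proof in the paper, but the parallel Corollary~\ref{c1main2} is obtained by exactly the summation over $t\ge 1$ and $1\le r\le k$ that you describe, so the same argument applies here verbatim. Your extra care in checking that $(t,r)\mapsto (t-1)k+r$ gives a genuine partition of both families is a welcome elaboration of what the paper leaves implicit.
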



\end{document}